\providecommand{\U}[1]{\protect\rule{.1in}{.1in}}
\providecommand{\U}[1]{\protect\rule{.1in}{.1in}}
\providecommand{\U}[1]{\protect\rule{.1in}{.1in}}
\providecommand{\U}[1]{\protect\rule{.1in}{.1in}}
\providecommand{\U}[1]{\protect\rule{.1in}{.1in}}
\newcommand{\ulambda}{{\boldsymbol{\lambda}}}
\newcommand{\umu}{{\boldsymbol{\mu}}}
\newcommand{\uemptyset }{{\boldsymbol{\emptyset}}}
\newtheorem{Th}{Theorem}[subsection]
\newtheorem{lemma}[Th]{Lemma}
\newtheorem{Prop}[Th]{Proposition}
\theoremstyle{remark}
{\rmfamily}
\theoremstyle{definition}
{\rmfamily}
\newtheorem{exa}[Th]{Example}{\rmfamily}
\newtheorem{abs}[Th]{\bfseries}
\newcommand\blfootnote[1]{%
  \begingroup
  \renewcommand\thefootnote{}\footnote{#1}%
  \addtocounter{footnote}{-1}%
  \endgroup
}
\begin{document}

\title{Two maps on affine type $A$ crystals and Hecke algebras}
\author{N.Jacon}
\maketitle
\date{}
\blfootnote{\textup{2010} \textit{Mathematics Subject Classification}: \textup{20C08,05E10,17B37}} 
\begin{abstract}
We use the crystal isomorphisms of the Fock space to describe two maps 
 on partitions and multipartitions which naturally appear in the crystal basis theory for quantum groups in affine type $A$ and in the representation 
  theory of Hecke algebras of type $G(l,l,n)$. 

\end{abstract}

\section{Introduction}

Since the works of Lascoux, Leclerc, Thibon and Ariki in the $90$'s, it is known  that the representation theory of 
  Hecke algebras of complex reflection groups is closely related to the crystal basis theory for quantum groups. 
   In particular, the crystal basis for Fock spaces in affine type $A$ leads to  a classification of the simple modules of the  Hecke algebra of type $G(l,1,n)$ (also known as Ariki-Koike algebra) in the modular case
     by certain combinatorial objects called Uglov $l$-partitions.  This includes the
     cases of Iwahori-Hecke algebras of type $A$ and $B$ (see \cite{A,GJ}).

A lot of informations on the  representation theory of Hecke algebras of type $D_n$ or, more generally, of type $G(l,l,n)$ 
  can be obtained from  the  $G(l,1,n)$ case. In fact, these latter algebras can be seen as subalgebras of Hecke algebras of type $G(l,1,n)$ and it is possible to 
   produce all the simple modules by studying the restriction  of the simple modules  of the Hecke algebras of type  $G(l,1,n)$,   using Clifford theory. This problem has been studied in various papers using different approaches (see \cite{Hugp,GwJ} and the references theirin). 
 The one  developed in \cite{Hugp} and \cite{Hl}  in particular involves the existence of two maps  which are defined using the crystal graph of an irreducible highest weight module in affine type $A$:
 \begin{itemize}
 \item The first map associates to each Uglov $l$-partition labelling a vertex of the crystal, another Uglov $l$-partition. 
 \item The second one associates to each $e$-regular partition, labelling a vertex of the crystal of the fundamental representation   a certain Uglov $l$-partition. 
 \end{itemize}
 The existence of such maps is non trivial  and based on the structure of the associated crystals and  their  descriptions  are    only recursive on the size of the partitions/multipartitions involved. 
 The interests of these maps is that they allow to describe the restriction of the simple modules of the Hecke algebras of type $G(l,1,n)$ to the Hecke  algebras of type $G(l,l,n)$. 
 
  The aim of this note is to recover, generalize and explicit these results. The main tools of the proof are the  crystal isomorphisms defined and described in \cite{JL}. Using them, the proofs become  purely combinatorial and quite elementary. They also permits to explain how the approaches developed in \cite{Hugp} and in \cite{GwJ} are related.

\section{Crystals}

In this part, we quickly recall some basic combinatorial notions, then we focus on the definition and on important properties of the crystals for Fock spaces. 
 In all this section, we set  $l\in \mathbb{Z}_{>0}$ and  $e\in \mathbb{Z}_{>1}$.  

\subsection{Generalities on Fock spaces and crystals}

\begin{abs}
A {\it partition} is by definition   a nonincreasing
sequence $\lambda=(\lambda_{1},\ldots, \lambda_{m})$ of nonnegative
integers.  If  $\sum_{1\leq i\leq m} \lambda_i=n$, we say that $\lambda$ is a partition of $n$.  
 For $j=1,\ldots, l$, let $\lambda^j$ be a partition of $n_j\in \mathbb{Z}_{\geq 0}$ then 
we say that the $l$-tuple $(\lambda^1,\ldots,\lambda^l)$ is an {\it $l$-partition} 
 of  $n$ if  
  $\sum_{1\leq j\leq l} n_j=n$. We denote by 
   $\Pi^l (n)$  the set of $l$-partitions of rank $n$. 
  The empty $l$-partition is by definition the unique 
  partition of $0$ and it is denoted by $\uemptyset:=(\emptyset,\ldots,\emptyset)$. When $l=1$, the $1$-partitions are identified with the partitions in an obvious way. 
 \end{abs}

 \begin{abs}
  Let  ${\bf s}=(s_1,\ldots,s_l)\in \mathbb{Z}^l$ (we say that ${\bf s}$ is a {\it multicharge}). 
  Let $q$ be an indeterminate. The  $\mathbb{Q} (q)$-vector space generated by all the $l$-partitions:
$$\mathcal{F}_q:=\bigoplus_{n\in \mathbb{Z}_{\geq 0}} \bigoplus_{\ulambda \in \Pi^l (n)} \mathbb{Q} (q) \ulambda$$
is called the {\it Fock space}.  
Let ${\mathcal{U}_{q}(\widehat{\mathfrak{sl}_{e}})}$ be the quantum group of
affine type $A_{e-1}^{(1)}$. This is an associative $\mathbb{Q}(q)$-algebra
with generators $e_i,f_i,t_i,t_i^{-1}$ (for $i=0,\ldots,e-1$) and $\partial$
and relations given in \cite[\S 6.1]{GJ}. We denote by ${\mathcal{U}_q
^{\prime}(\widehat{\mathfrak{sl}_e}) }$ the subalgebra generated by $%
e_i,f_i,t_i,t_i^{-1}$ (for $i=0,...,e-1$).  For $i=0,\ldots,e-1$,   we denote by $\Lambda_i$ the fundamental weights and the simple roots are given by:
  $$\alpha_i= -\Lambda_{i-1}+2\Lambda_i-\Lambda_{i+1},$$ where the indices are taken modulo $e$.   
There is an action of $\mathcal{U}_q^{\prime} (\widehat{\mathfrak{sl}}_e)$  on the Fock space. This action depends on the choice of ${\bf s}$ and the 
 module generated by the empty multipartition  is an irreducible highest weight module with weight $\Lambda_{s_1}+\ldots +\Lambda_{s_l}$.  
 We do not need the precise definition of this action and we refer to \cite[Ch. 6]{GJ} for details.

 \end{abs}

\begin{abs}
     To each $\ulambda\in \Pi^l (n)$ is associated  its {\it   Young tableau}:
$$[\ulambda]=\{(a,b,c)\ |  \ a\geq 1,\ c\in \{0,\ldots,l-1\}, 1\leq b\leq \lambda_a^c\}.$$
We define the {\it content} of a node  $\gamma=(a,b,c)\in [\ulambda]$ as follows: 
$$\text{cont}(\gamma)=b-a+s_c,$$
 and the residue $\mathrm{res}(\gamma )$ is by definition the content of the node taken modulo $e$. 
 We will say that $\gamma $ is an $i+e\mathbb{Z}$-node of ${\boldsymbol{\lambda}}$ when $%
\mathrm{res}(\gamma )\equiv i +e\mathbb{Z}$ (we will sometimes simply called it an $i$-node). Finally, We say that $\gamma $
is {\it removable} when $\gamma =(a,b,c)\in [{\boldsymbol{\lambda}}]$ and $[{%
\boldsymbol{\lambda}}]\backslash \{\gamma \}$ is the Young diagram of  an $l$-partition. Similarly, $%
\gamma $ is {\it addable} when $\gamma =(a,b,c)\notin [{\boldsymbol{\lambda}}]$ and $%
[{\boldsymbol{\lambda}}]\cup \{\gamma \}$ is the Young diagram of  an $l$-partition.

 Let $\gamma$, $\gamma'$ be two removable or addable $i$-nodes of  $\ulambda$.  We  
denote
$$\gamma\prec_{\bf s}\gamma' \qquad 
\stackrel{\text{def}}{\Longleftrightarrow}\qquad \left\{\begin{array}{ll}
\mbox{either} & b-a+s_c<b'-a'+s_{c'},\\
\mbox{or} & b-a+s_c=b'-a'+s_{c'} \text{ and }c>c'.\end{array}\right.$$
\end{abs}
\begin{abs}

For $\ulambda$ an $l$-partition and $i\in \mathbb{Z}/e\mathbb{Z}$, we can consider its set of addable and removable $i$-nodes.  Let $w^{(e,{\bf s})}_{i}(\ulambda)$ be the word obtained first by writing the
addable and removable $i$-nodes of ${\boldsymbol{\lambda}}$ in {increasing}
order with respect to $\prec _{{\mathbf{s}}}$,  
next by encoding each addable $i$-node by the letter $A$ and each removable $%
i$-node by the letter $R$.\ Write $\widetilde{w}^{(e,{\bf s}}_{i}(\ulambda)=A^{p}R^{q}$ for the
word derived from $w^{(e,{\bf s})}_{i}(\ulambda)$ by deleting as many of the factors $RA$ as
possible. In the following, we will sometimes write 
   $\widetilde{w}_{i}(\ulambda)$ and $w_i (\ulambda)$ instead of    $\widetilde{w}^{(e,{\bf s})}_{i}(\ulambda)$ and $w^{(e,{\bf s})}_{i}(\ulambda)$ if there is no possible confusion.

If $p>0,$ let $\gamma $ be the rightmost addable $i$-node in $%
\widetilde{w}_{i}$. The node $%
\gamma $ is called the {good addable $i$-node}. If $r>0$, the leftmost removable $i$-node in 
 $\widetilde{w}_{i}$ is called the {good removable $i$-node}.
 
 \end{abs}
 
\begin{exa}
For $l=2$, ${\bf s}=(0,1)$ and $e=3$. Let us consider the   $2$-partition $\ulambda:=((4),(2,1))$ of $7$. We write its Young tableau and 
 the residues of the nodes in the associated boxes:
  $$
\left(
\begin{array}{|c|c|c|c|}
  \hline
  0& 1  &2 &0   \\
  \hline
\end{array}\;,\;
\begin{array}{|c|c|}
  \hline
  1  &2      \\
  \cline{1-2}
 0   \\
 \cline{1-1}
\end{array}
\right)$$
We have $\widetilde{w}_{0}(\ulambda)=RAR$ and thus $(1,4,1)$ is a good removable $0$-node for $\ulambda$. We have 
$\widetilde{w}_{2}(\ulambda)=AAR$ and thus $(2,1,1)$ is a good addable $2$-node for $\ulambda$ and $(1,2,2)$ 
 is a good removable $2$-node for it.

\end{exa}

\begin{abs}

We denote by 
$\mathcal{G}_{e,{\bf s}}$ the crystal of the Fock space computed using the Kashiwara operators  $\widetilde{e}^{e,{\bf s}}_i$ and  $\widetilde{f}^{e,{\bf s}}_i$. Again, we refer to \cite{GJ} for details. This is the graph with 
\begin{itemize}
\item vertices : the $l$-partitions $\ulambda\vdash_l n$ with $n\in \mathbb{Z}_{\geq 0}$.
\item  arrows: $\ulambda \overset{i}{\rightarrow }\umu$  if and only if     $\widetilde{f}^{e,{\bf s}}_i  \ulambda=\umu$ (or equivalently $\widetilde{e}^{e,{\bf s}}_i  \umu=\ulambda$). This means that ${{\umu}}$ is obtained by
adding to ${{\ulambda}}$ a good addable $i$-node, or equivalently, $\ulambda$ is obtained from $\umu$ 
 by removing a good removable $i$-node. 
\end{itemize}

\end{abs}
\begin{exa}\label{gr}
For $l=3$,  $e=2$ and ${\bf s}=(0,0,1)$ the graph below is the subgraph of $\mathcal{G}_{e,{\bf s}}$ containing the empty $3$-partition and with the $3$-partitions with rank less or equal than $4$.

\begin{center}
\begin{picture}(250,150)
\put(  125, 140){$(\emptyset,\emptyset,\emptyset)$}

 \put( 75, 100){$(1,\emptyset,\emptyset)$}
  \put( 170, 100){$(\emptyset,\emptyset,1)$}

\put( 50, 60){$(2,\emptyset,\emptyset)$}
\put( 100, 60){$(1,1,\emptyset)$} 
\put( 195, 60){$(\emptyset,\emptyset,2)$}

\put( 20, 20){$(3,\emptyset,\emptyset)$}
\put( 75, 20){$(2,\emptyset,1)$} 
\put( 120, 20){$(2,1,\emptyset)$} 
\put( 170, 20){$(1,\emptyset,2)$} 
\put( 230, 20){$(\emptyset,\emptyset,3)$}

\put( -30, -20){$(4,\emptyset,\emptyset)$}
\put( 10, -20){$(3,1,\emptyset)$} 
\put( 50, -20){$(2,\emptyset,2)$} 
\put( 85,- 20){$(2.1,\emptyset,1)$} 
\put( 130, -20){$(2,2,\emptyset)$} 
\put( 170,-20){$(1,1,2)$}
\put( 210, -20){$(1,\emptyset,3)$} 
\put( 250,-20){$(\emptyset,\emptyset,4)$}

 \put( 142, 132){\vector(-2,-1){40}} 
  \put(142, 132){\vector(2,-1){40}}

 \put( 92, 92){\vector(-1,-1){20}} \put( 92, 92){\vector(1,-1){20}} 
   \put(187, 92){\vector(1,-1){20}}

 \put( 68, 52){\vector(-1,-1){20}} \put( 68, 52){\vector(1,-1){20}} 
   \put(115, 52){\vector(1,-1){20}} 
 \put( 215, 52){\vector(-1,-1){20}} \put( 215, 52){\vector(1,-1){20}}

 \put( 32, 15){\vector(-2,-1){40}} \put( 32, 15){\vector(-1,-2){10}}

  \put( 132, 15){\vector(1,-2){10}} 
   \put(90, 15){\vector(-1,-1){20}}   \put(90, 15){\vector(1,-1){20}} 
 \put( 190, 15){\vector(0,-1){20}} \put( 190, 15){\vector(3,-2){30}} 
\put( 248, 15){\vector(1,-1){20}}

\put(  107, 120){$0$} 
\put(  170, 120){$1$}

\put(  203, 78){$0$} 
\put(  106, 78){$0$} 
 \put(  71, 78){$1$}

\put(  196, 40){$0$} 
 \put(  227, 40){$1$}  
\put(  80, 40){$1$} 
\put(  49, 40){$0$} 
\put(  130, 40){$1$}

\put(  261, 3){$0$} 
\put(  181, 2){$0$} 
\put(  141, 2){$1$} 
\put(  208, 3){$1$} 
 \put(  0, 2){$1$} 
\put(  33, 2){$0$} 
\put(  70, 3){$0$} 
 \put(  105, 3){$1$} 
\end{picture} 
\end{center}

\end{exa}

\vspace{1cm} 

\begin{abs}\label{flotw}

Let $\Phi_{e, {\bf s}} (n)$ to be the set of $l$-partitions of rank $n$ in the connected component of $\mathcal{G}_{e,{\bf s}}$  containing the empty $l$-partition. This is called the set of {\it  Uglov $l$-partitions}.  Hence, by definition, an Uglov $l$-partition is defined  by adding successively good nodes to the empty $l$-partition (with arbitrary residues).  
It strongly  depends on the choice of ${\bf s}$. Assume that ${\bf s}$ is such that  $0<s_j-s_{i} <e$ for all $0<i<j\leq l$ then the set $\Phi_{e, {\bf s}} (n)$ is known as the set of FLOTW $l$-partitions and it has a nice non recursive description (see \cite[\S 6.3.2]{GJ}).   We have $\ulambda=(\lambda^1,\ldots,\lambda^l)\in \Phi_{{\bf s},e} (n)$ 
 if and only if:
 \begin{enumerate}
 \item For all $j=1,\ldots,l-1$ and $i\in \mathbb{Z}_{>0}$, we have:
 $$\lambda_i^j\geq \lambda_{i+s_{j+1}-s_j}^{j+1}.$$
 \item For all $i\in \mathbb{Z}_{>0}$, we have:
 $$\lambda_i^{l}\geq \lambda_{i+e+s_{1}-s_l}^{1}.$$ 
 \item For all $k\in \mathbb{Z}_{>0}$, the set 
 $$\{ \lambda_i^j-i+s_j+e\mathbb{Z}\ |\ i\in \mathbb{Z}_{>0},\ \lambda_i^j=k, j=1,\ldots,l\},$$
 is a proper subset of $\mathbb{Z}/e\mathbb{Z}$. 
 \end{enumerate}
In general, we don't have such a nice description of the set of Uglov $l$-partitions.

\end{abs}
\begin{exa}
In the case where $l=1$, the set $\Phi_{e, (0)} (n)$ is the set of $e$-regular partitions of $n$, that is, the set of partitions of rank $n$ such that no non zero parts are repeated $e$ or more times.  

\end{exa}

\begin{exa}
Following Example \ref{gr}, we have 
$$\Phi_{2, (0,0,1)} (4)=\{(4,\emptyset,\emptyset),(3,1,\emptyset),(2,\emptyset,2), (2.1,\emptyset,1), (2,2,\emptyset), (1,1,2), (1,\emptyset,3), (\emptyset,\emptyset,4)\}$$

\end{exa}
\subsection{Crystal isomorphisms}

In this part, we recall the definition of certain crystal isomorphisms  studied in \cite{JL}. These maps will be intensively used in the next sections.

   \begin{abs}\label{aff}
Let $\widehat{\mathfrak{S}}_l$ be the (extended) affine symmetric group. This is defined as follows. We denote by $P_l:=\mathbb{Z}^l$ the $\mathbb{Z}$-module with  standard basis $\{y_i\ |\ i=1,\ldots, l\}$. For $i=1,\ldots,l-1$, we denote by $\sigma_i$ the transposition $(i,i+1)$ of  $\mathfrak{S}_l$. Then 
$\widehat{\mathfrak{S}}_l$ can be seen as 
  the semi-direct product $P_l \rtimes \mathfrak{S}_l$ where the relations  are given by $\sigma_i y_j=y_j \sigma_i$ for $j\neq i,i+1$ and $\sigma_i y_i \sigma_i=y_{i+1}$ for $i=1,\ldots,l-1$ and $j=1,\ldots,l$.  
This group acts  on $\mathbb{Z}^l$  by setting for any ${{{\bf s}}}=(s_{1},\ldots ,s_{l})\in 
\mathbb{Z}^{l}$: %
$$\begin{array}{rcll}
\sigma _{c}.{{{\bf s}}}&=&(s_{1},\ldots ,s_{c-1},s_{c+1},s_{c},s_{c+2},\ldots ,s_{l})&\text{for }c=1,\ldots,l-1 \text{ and }\\
y_i.{{{\bf s}}}&=&(s_{1},s_{2},\ldots,s_i+e,\ldots ,s_{l})&\text{for }i=1,\ldots,l.
\end{array}$$
A fundamental domain for this action is given by
$$\mathcal{A}^e_l:=\left\{(s_1,\ldots  ,s_{l})\in \mathbb{Z}^l\ |\ 0\leq s_1 \leq \ldots  \leq s_l <e \right\}.$$
Note that we thus have a description of   $\Phi_{{\bf s},e} (n)$ when ${\bf s}$ is in this domain by \S \ref{flotw}. 
Let  
 $\tau:=y_l \sigma_{l-1}\ldots\sigma_1$ then  we see that 
    $\widehat{\mathfrak{S}}_l$ is generated by $\tau$ and $\sigma_i$ for $i=1,\ldots,l-1$. In addition, we have:
   $$\tau.{\bf s}=(s_2,\ldots,s_{l},s_1+e).$$
  \end{abs}

\begin{abs}\label{crt}
Assume that ${\bf s}\in \mathbb{Z}^l$ and ${\bf s}'\in \mathbb{Z}^l$ are in the same orbit modulo $\widehat{\mathfrak{S}}_l$. 
 As explained in \cite[\S 6.2.17]{GJ}, the crystal graph theory allows to construct a combinatorial bijection between the two sets of Uglov $l$-partitions 
  $\Phi_{{\bf s},e} (n)$ and  $\Phi_{{\bf s}',e} (n)$. Let  $\ulambda \in \Phi_{{\bf s},e} (n)$ then there exists a sequence $(i_1,\ldots,i_n)\in (\mathbb{Z}/e\mathbb{Z})^n$ such that:
 $$\widetilde{f}^{e,{\bf s}}_{i_1} \ldots \widetilde{f}^{e,{\bf s}}_{i_n} \emptyset =\ulambda$$
Then there exists  $\umu \in \Phi_{{\bf s}',e} (n)$ such that
$$\widetilde{f}^{e,{\bf s}'}_{i_1} \ldots \widetilde{f}^{e,{\bf s}'}_{i_n} \emptyset =\umu$$
We set $\Psi^{e}_{{\bf s}\to {\bf s}' }(\ulambda):=\umu$ (it does not depends on the choice of the sequence $(i_1,\ldots,i_n)$). This defines
a bijection 
  $$\Psi^e_{{\bf s}\to {\bf s}'}  :  \Phi_{{\bf s},e} (n) \to  \Phi_{{\bf s}',e} (n).$$
A combinatorial description of this map is given in \cite{JL}. Let us  quickly explain   how. There exists $w\in \widehat{\mathfrak{S}}_l$ such that ${\bf s'}=w.{\bf s}$. Then,
 $w$ is a product of $\tau$ and $\sigma_i$'s ($i=1,\ldots,l-1$). Thus $\Psi^{e}_{{\bf s}\to {\bf s}'} $ is a composition of maps of the form 
 $\Psi^e_{{\bf v}\to \tau.{\bf v}}$ and $\Psi^{e}_{{\bf v}\to \sigma_i.{\bf v}}$  with ${\bf v}\in \mathbb{Z}^l$   explicit by induction. 
 \begin{itemize}
 \item For all $\ulambda=(\lambda^1,\ldots,\lambda^l) \in \Phi_{e, {\bf v}} (n)$, we  have 
 $$\Psi^e_{{\bf v}\to\tau.{\bf v}} (\ulambda)=(\lambda^2,\ldots,\lambda^l,\lambda^1).$$
 \item  For all $\ulambda=(\lambda^1,\ldots,\lambda^l) \in \Phi_{e, {\bf v}} (n)$, we  have 
  $$\Psi^e_{{\bf v}\to\sigma_i .{\bf v}} (\ulambda)=(\lambda^1,\ldots,\lambda^{i-1},\widetilde{\lambda}^{i+1},\widetilde{\lambda}^{i},\lambda^{i+2},\ldots,\lambda^l),$$
 where   $(\widetilde{\lambda}^{i+1},\widetilde{\lambda}^{i})$ is obtained from  $(\lambda^i,\lambda^{i+1})$ via a purely simple combinatorial process described in \cite[th. 5.4.2]{JL} (in terms of Lusztig symbols) 
  or in \cite[\S 5.3]{J} (in terms of Young tableaux).

 \end{itemize}
 
\end{abs}
\begin{abs}
Assume that ${\bf s}=(s_1,\ldots,s_l)$ satisfies   $s_i-s_j \geq n-1-e$ for all $i<j$ then we say that ${\bf s}$ is {\it very dominant}. If both ${\bf s}$ and 
 ${\bf s}'$ are very dominant (comparing to $n$) and in the same orbit then $\Psi^e_{{\bf s}\to {\bf s}'}$ is the identity and the set 
 $\Phi_{{\bf s},e} (n)$ is   known as the set of Kleshchev $l$-partitions (see \cite[Ex. 6.2.16]{GJ}). 

If ${\bf s}\in \mathbb{Z}^l$, one way to compute the set  $\Phi_{{\bf s},e} (n)$ of Uglov $l$-partitions consists in     finding $w\in \widehat{\mathfrak{S}}_l$
 such that ${\bf s}'=w.{\bf s}\in \mathcal{A}^e_l$. We can then use the description
  of the set $\Phi_{{\bf s}',e} (n)$ in \S \ref{flotw} 
  and then apply the isomorphism $\Psi^e_{{\bf s}'\to {\bf s}}$.

\end{abs}

\begin{exa}

For $l=3$,  $e=2$ and ${\bf s}=(2,0,3)$ the graph below is the subgraph of $\mathcal{G}_{e,{\bf s}}$ containing the empty $3$-partition.

\begin{center}
\begin{picture}(250,150)
\put(  125, 140){$(\emptyset,\emptyset,\emptyset)$}

 \put( 75, 100){$(1,\emptyset,\emptyset)$}
  \put( 170, 100){$(\emptyset,\emptyset,1)$}

\put( 50, 60){$(2,\emptyset,\emptyset)$}
\put( 100, 60){$(1,1,\emptyset)$} 
\put( 195, 60){$(\emptyset,\emptyset,2)$}

\put( 20, 20){$(3,\emptyset,\emptyset)$}
\put( 75, 20){$(2,\emptyset,1)$} 
\put( 120, 20){$(2,1,\emptyset)$} 
\put( 170, 20){$(1,\emptyset,2)$} 
\put( 230, 20){$(\emptyset,\emptyset,3)$}

\put( -30, -20){$(4,\emptyset,\emptyset)$}
\put( 10, -20){$(3,1,\emptyset)$} 
\put( 50, -20){$(3,\emptyset,1)$} 
\put( 85,- 20){$(2.1,\emptyset,1)$} 
\put( 130, -20){$(2,1,1)$} 
\put( 170,-20){$(1,\emptyset,2.1)$}
\put( 210, -20){$(1,\emptyset,3)$} 
\put( 250,-20){$(\emptyset,\emptyset,4)$}

 \put( 142, 132){\vector(-2,-1){40}} 
  \put(142, 132){\vector(2,-1){40}}

 \put( 92, 92){\vector(-1,-1){20}} \put( 92, 92){\vector(1,-1){20}} 
   \put(187, 92){\vector(1,-1){20}}

 \put( 68, 52){\vector(-1,-1){20}} \put( 68, 52){\vector(1,-1){20}} 
   \put(115, 52){\vector(1,-1){20}} 
 \put( 215, 52){\vector(-1,-1){20}} \put( 215, 52){\vector(1,-1){20}}

 \put( 32, 15){\vector(-2,-1){40}} \put( 32, 15){\vector(-1,-2){10}}

  \put( 132, 15){\vector(1,-2){10}} 
   \put(90, 15){\vector(-1,-1){20}}   \put(90, 15){\vector(1,-1){20}} 
 \put( 190, 15){\vector(0,-1){20}} \put( 190, 15){\vector(3,-2){30}} 
\put( 248, 15){\vector(1,-1){20}}

\put(  107, 120){$0$} 
\put(  170, 120){$1$}

\put(  203, 78){$0$} 
\put(  106, 78){$0$} 
 \put(  71, 78){$1$}

\put(  196, 40){$0$} 
 \put(  227, 40){$1$}  
\put(  80, 40){$1$} 
\put(  49, 40){$0$} 
\put(  130, 40){$1$}

\put(  261, 3){$0$} 
\put(  181, 2){$0$} 
\put(  141, 2){$1$} 
\put(  208, 3){$1$} 
 \put(  0, 2){$1$} 
\put(  33, 2){$0$} 
\put(  70, 3){$0$} 
 \put(  105, 3){$1$} 
\end{picture} 
\end{center}

\vspace{1cm}

Looking at example \ref{gr}, we see that  $\Psi^2_{(0,0,1)\to (2,0,3)}$ is the identity for the Uglov $3$-partitions of ranks $\leq 3$, and we have 
$\Psi^2_{(0,0,1)\to (2,0,3)} (2,\emptyset,2)=(3,\emptyset,1)$, 
$\Psi^2_{(0,0,1)\to (2,0,3)} (2,2,\emptyset)=(2,1,1)$, 
$\Psi^2_{(0,0,1)\to (2,0,3)} (1,1,2)=(1,\emptyset,2.1)$, and  $\Psi^3_{(0,0,1)\to (2,0,3)}$ is the identity for the others  Uglov $3$-partitions of ranks $4$. 
This formulae can be also obtained without looking at the crystal. Indeed, we have $(2,0,3)=(\sigma_1 \tau)^2 (0,0,1)$  and thus 
$\Psi^2_{(0,0,1)\to (2,0,3)}=\Psi^2_{(0,0,1)\to \tau (0,0,1)}\circ \Psi^2_{\tau (0,0,1)\to \sigma_1 \tau (0,0,1)} \circ  \Psi^3_{\sigma_1 \tau (0,0,1)\to \tau \sigma_1 \tau (0,0,1)}
\circ \Psi^2_{\tau \sigma_1 \tau (0,0,1)\circ \sigma_1\tau \sigma_1 \tau (0,0,1)}$ and one can use the combinatorial description of the isomorphisms.
\end{exa}

\section{Two maps on crystals}

The above results will allow us to recover and give precisions on  two results on crystals in affine type $A$ thanks to quite elementary proofs.  These two results concern a particular choice of multicharge which  naturally appears in the context of Hecke algebras, as we will see in the next part.  In this section, we thus assume that $l$ divides $e$ and that 
 ${\bf s}\in \mathbb{Z}^l$ is in the orbit of  the multicharge $(0,e/l,\ldots,(l-1)e/l)$ modulo $\widehat{\mathfrak{S}}_l$.

\subsection{Hu's map}

The  first result that we want ro recover is in fact a  direct generalization of a result by Hu \cite[Theorem 3.6]{Hugp}. We propose here an elementary proof of this result using our 
 crystal isomorphisms and we will also give a general method to explicitly compute the $l$-partitions that this results allows to define. 

 \begin{Prop}[Hu] \label{hup}
 Assume that $\ulambda \in \Phi_{e, {\bf s}} (n)$. Then there exists a sequence $(i_1,\ldots,i_n)\in (\mathbb{Z}/e\mathbb{Z})^n$ such that:
$$\widetilde{f}^{e,{\bf s}}_{i_1} \ldots \widetilde{f}^{e,{\bf s}}_{i_n} \emptyset =\ulambda.$$
Then for any such sequences,  there exists $\umu \in \Phi_{e, {\bf s}} (n)$ such that 
$$\widetilde{f}^{e,{\bf s}}_{i_1+e/l} \ldots \widetilde{f}^{e,{\bf s}}_{i_n+e/l} \emptyset =\umu$$
 \end{Prop}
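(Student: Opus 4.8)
The plan is to realise the residue shift $i\mapsto i+e/l$ as an honest change of multicharge that stays inside the $\widehat{\mathfrak{S}}_l$-orbit of ${\bf s}$, and then to read off the statement from the crystal isomorphism $\Psi^e$ of \S\ref{crt}. The starting observation is that for the reference multicharge ${\bf s}_0:=(0,e/l,\ldots,(l-1)e/l)$ one has
\[
\tau.{\bf s}_0=(e/l,2e/l,\ldots,(l-1)e/l,e)={\bf s}_0+(e/l)(1,\ldots,1),
\]
so that adding the constant vector $(e/l)(1,\ldots,1)$ to ${\bf s}_0$ coincides with applying $\tau$. Since adding a constant vector commutes with every generator of the $\widehat{\mathfrak{S}}_l$-action (the permutations of the $s_c$ and the shifts $y_i$ by $e$ described in \S\ref{aff}), this constant shift commutes with the whole group action; hence for ${\bf s}=w.{\bf s}_0$ in the orbit, both ${\bf s}\pm(e/l)(1,\ldots,1)=(w\tau^{\pm 1}).{\bf s}_0$ again lie in the orbit. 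In particular ${\bf t}:={\bf s}-(e/l)(1,\ldots,1)$ is in the same orbit as ${\bf s}$.

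Next I would record the elementary operator identity $\widetilde{f}^{e,{\bf s}}_{i+e/l}=\widetilde{f}^{e,{\bf t}}_{i}$ for all $i$. Indeed, $\mathrm{cont}_{\bf t}(\gamma)=\mathrm{cont}_{\bf s}(\gamma)-e/l$ for every node $\gamma$, so $\gamma$ is an $i$-node for ${\bf t}$ exactly when it is an $(i+e/l)$-node for ${\bf s}$; moreover the subtracted constant cancels in every comparison defining $\prec_{\bf t}$, and the tie-breaking condition $c>c'$ is untouched, so $\prec_{\bf t}$ and $\prec_{\bf s}$ agree. Consequently the ordered lists of addable and removable $i$-nodes for ${\bf t}$ and of addable and removable $(i+e/l)$-nodes for ${\bf s}$ coincide, the reduced words $\widetilde{w}^{e,{\bf t}}_{i}$ and $\widetilde{w}^{e,{\bf s}}_{i+e/l}$ are identical, and the two Kashiwara operators select the same good node.

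Finally I would conclude. Writing $\ulambda=\widetilde{f}^{e,{\bf s}}_{i_1}\cdots\widetilde{f}^{e,{\bf s}}_{i_n}\emptyset\in\Phi_{e,{\bf s}}(n)$, the operator identity gives
\[
\widetilde{f}^{e,{\bf s}}_{i_1+e/l}\cdots\widetilde{f}^{e,{\bf s}}_{i_n+e/l}\emptyset=\widetilde{f}^{e,{\bf t}}_{i_1}\cdots\widetilde{f}^{e,{\bf t}}_{i_n}\emptyset.
\]
Since ${\bf t}$ lies in the orbit of ${\bf s}$, the construction of \S\ref{crt} applied to $\ulambda$ shows that the right-hand side equals $\Psi^e_{{\bf s}\to{\bf t}}(\ulambda)$: it is a genuine $l$-partition (so none of the operators vanishes along the way), and it is independent of the chosen sequence $(i_1,\ldots,i_n)$. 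Setting $\umu$ equal to this common value settles both the existence/non-vanishing claim and the implicit well-definedness. That $\umu\in\Phi_{e,{\bf s}}(n)$ is then immediate from the definition of Uglov $l$-partitions in \S\ref{flotw}: by the operator identity $\umu$ is obtained from $\emptyset$ by successively adding good $i$-nodes with respect to ${\bf s}$, hence it lies in the connected component of $\emptyset$ in $\mathcal{G}_{e,{\bf s}}$. The only real content, and the step I would check most carefully, is the operator identity $\widetilde{f}^{e,{\bf s}}_{i+e/l}=\widetilde{f}^{e,{\bf t}}_{i}$ together with the orbit membership of ${\bf t}$; once these are in place, the non-vanishing and well-definedness are inherited for free from the already-established bijection $\Psi^e_{{\bf s}\to{\bf t}}$.
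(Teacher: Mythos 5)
Your proof is correct, but it takes a genuinely different route from the paper's. The paper's core technical step is Lemma \ref{Lhup}: an induction on $n$ at the fundamental-domain multicharge ${\bf s}_0=(0,e/l,\ldots,(l-1)e/l)$, comparing the words $w_i$ of a multipartition and of its cyclic shift node by node, which shows that the residue shift $i\mapsto i+e/l$ is realised by $(\lambda^1,\ldots,\lambda^l)\mapsto(\lambda^l,\lambda^1,\ldots,\lambda^{l-1})$; the general case is then obtained by conjugating with the isomorphisms $\Psi^e_{{\bf s}\to{\bf s}_0}$ and $\Psi^e_{{\bf s}_0\to{\bf s}}$. You instead absorb the residue shift into the multicharge: the operator identity $\widetilde{f}^{e,{\bf s}}_{i+e/l}=\widetilde{f}^{e,{\bf t}}_{i}$ for ${\bf t}={\bf s}-(e/l)(1,\ldots,1)$ is indeed immediate (the constant cancels in every comparison for $\prec$ and shifts all residues uniformly), and the orbit membership of ${\bf t}$ is exactly where the standing hypothesis on ${\bf s}$ enters, via $\tau.{\bf s}_0={\bf s}_0+(e/l)(1,\ldots,1)$ together with the fact that constant shifts commute with the $\widehat{\mathfrak{S}}_l$-action; a single appeal to \S\ref{crt} then delivers non-vanishing, independence of the chosen sequence, and membership in $\Phi_{e,{\bf s}}(n)$ all at once, with no induction. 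What the paper's route buys is the explicit combinatorial form of $\umu$ at ${\bf s}_0$ (the cyclic shift), which is the substance of Hu's original theorem and feeds the explicit computations later in the paper; what your route buys is brevity and the clean identification $\umu=\Psi^e_{{\bf s}\to{\bf t}}(\ulambda)$. Note, though, that your argument recovers the lemma's explicit form for free: at ${\bf s}={\bf s}_0$ one has ${\bf t}=\tau^{-1}.{\bf s}_0$, and $\Psi^e_{{\bf s}_0\to\tau^{-1}.{\bf s}_0}$ is the inverse of the component-rotation map $\Psi^e_{{\bf v}\to\tau.{\bf v}}$ described in \S\ref{crt}, i.e.\ precisely $(\lambda^1,\ldots,\lambda^l)\mapsto(\lambda^l,\lambda^1,\ldots,\lambda^{l-1})$, so the two proofs produce the same $\umu$.
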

 
 To prove this proposition, we will proceed in two steps, we first prove the proposition for  a particular choice of multicharge which is in $\mathcal{A}^e_l$, and 
  then make use of the crystal isomorphisms we have already defined. 
 
  \begin{lemma}\label{Lhup}
  Let ${\bf s }=(0,e/l,\ldots,(l-1)e/l)$
 Assume that $\ulambda \in \Phi_{e, {\bf s}} (n)$ and that we have  a sequence $(i_1,\ldots,i_n)\in (\mathbb{Z}/e\mathbb{Z})^n$ such that:
$$\widetilde{f}^{e,{\bf s}}_{i_1} \ldots \widetilde{f}^{e,{\bf s}}_{i_n} \emptyset =\ulambda.$$
Then we have that  $\umu:=(\lambda^l,\lambda^1,\ldots,\lambda^{l-1})\in \Phi_{e, {\bf s}} (n)$ and we have
$$\widetilde{f}^{e,{\bf s}}_{i_1+e/l} \ldots \widetilde{f}^{e,{\bf s}}_{i_n+e/l} \emptyset =\umu.$$
 \end{lemma}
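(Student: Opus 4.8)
The plan is to exploit the special arithmetic of the multicharge ${\bf s}=(0,e/l,\ldots,(l-1)e/l)$. The first thing I would record is the elementary identity
$$\tau.{\bf s}=(e/l,2e/l,\ldots,(l-1)e/l,e)={\bf s}+\tfrac{e}{l}(1,1,\ldots,1),$$
which holds precisely because $l$ divides $e$ and ${\bf s}$ has equally spaced entries; here $\tau.{\bf s}$ is computed from the formula $\tau.{\bf s}=(s_2,\ldots,s_l,s_1+e)$ of \S\ref{aff}. So passing from ${\bf s}$ to $\tau.{\bf s}$ amounts to shifting every entry of the multicharge by the constant $c_0:=e/l$. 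The whole proof then rests on translating this uniform shift of the multicharge into a uniform shift of residues, and then reading off the effect through the known crystal isomorphism $\Psi^e_{{\bf s}\to\tau.{\bf s}}$.

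The main technical step I would carry out is the operator identity
$$\widetilde{f}^{e,{\bf s}}_i=\widetilde{f}^{e,\tau.{\bf s}}_{i+e/l}\qquad\text{for all }i\in\mathbb{Z}/e\mathbb{Z},$$
as maps on the set of all $l$-partitions. To prove it I would compare, for a fixed $\ulambda$, the data used to define both operators. The content of a node $(a,b,c)$ is $b-a+s_c$ for ${\bf s}$ and $b-a+s_c+c_0$ for $\tau.{\bf s}={\bf s}+c_0(1,\ldots,1)$; hence a node has residue $i$ for ${\bf s}$ if and only if it has residue $i+c_0$ for $\tau.{\bf s}$, so the addable/removable $i$-nodes for ${\bf s}$ are exactly the addable/removable $(i+c_0)$-nodes for $\tau.{\bf s}$. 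Moreover the order $\prec_{\bf s}$ is unchanged under the shift: the comparison $b-a+s_c<b'-a'+s_{c'}$ is insensitive to adding the same constant to both sides, and the tie-breaking rule depends only on the component indices $c,c'$, which are untouched. Consequently the words $w^{(e,{\bf s})}_i(\ulambda)$ and $w^{(e,\tau.{\bf s})}_{i+c_0}(\ulambda)$ coincide letter for letter, their reductions agree, the good nodes agree, and $\widetilde{f}^{e,{\bf s}}_i$ and $\widetilde{f}^{e,\tau.{\bf s}}_{i+c_0}$ add the same node. This verification is the heart of the argument and the place where the hypothesis $l\mid e$ (ensuring $c_0\in\mathbb{Z}$) is essential.

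With the operator identity in hand, the conclusion is pure bookkeeping. Substituting it into the given factorization yields
$$\ulambda=\widetilde{f}^{e,{\bf s}}_{i_1}\cdots\widetilde{f}^{e,{\bf s}}_{i_n}\emptyset=\widetilde{f}^{e,\tau.{\bf s}}_{i_1+e/l}\cdots\widetilde{f}^{e,\tau.{\bf s}}_{i_n+e/l}\emptyset,$$
so $\ulambda$ lies in the connected component of $\emptyset$ in $\mathcal{G}_{e,\tau.{\bf s}}$, i.e. $\ulambda\in\Phi_{e,\tau.{\bf s}}(n)$, and by the very definition of the crystal isomorphism the multipartition $\umu:=\widetilde{f}^{e,{\bf s}}_{i_1+e/l}\cdots\widetilde{f}^{e,{\bf s}}_{i_n+e/l}\emptyset$ equals $\Psi^e_{\tau.{\bf s}\to{\bf s}}(\ulambda)$; in particular $\umu\in\Phi_{e,{\bf s}}(n)$ and $\umu$ does not depend on the chosen residue sequence. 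It remains to identify $\Psi^e_{\tau.{\bf s}\to{\bf s}}$. By \S\ref{crt} the map $\Psi^e_{{\bf s}\to\tau.{\bf s}}$ is the left cyclic shift $(\lambda^1,\ldots,\lambda^l)\mapsto(\lambda^2,\ldots,\lambda^l,\lambda^1)$, and $\Psi^e_{\tau.{\bf s}\to{\bf s}}$ is its inverse, hence the right cyclic shift $(\lambda^1,\ldots,\lambda^l)\mapsto(\lambda^l,\lambda^1,\ldots,\lambda^{l-1})$. Applying it to $\ulambda$ gives exactly $\umu=(\lambda^l,\lambda^1,\ldots,\lambda^{l-1})$, which is the claim.
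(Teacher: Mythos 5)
Your proof is correct, but it follows a genuinely different route from the paper's. The paper proves the lemma by induction on $n$: it peels off the last operator, applies the inductive hypothesis to $\ulambda'$, and then checks by hand that if $\gamma=(a,b,c)$ is the good addable node of $\ulambda'$ then $\gamma'=(a,b,c+1)$ is the good addable node of the cyclically shifted $\umu'$, for the \emph{same} multicharge ${\bf s}$. Because cyclically shifting the components is not a uniform shift of contents (the wrapped component has its content changed by $e/l-e$ rather than $e/l$), the paper's verification that $\prec_{\bf s}$ is preserved requires a case distinction on the component index. You instead compare $\ulambda$ with \emph{itself} under the two multicharges ${\bf s}$ and $\tau.{\bf s}={\bf s}+\tfrac{e}{l}(1,\ldots,1)$; since this is a uniform shift of all contents and residues, the identity $\widetilde{f}^{e,{\bf s}}_i=\widetilde{f}^{e,\tau.{\bf s}}_{i+e/l}$ is immediate with no case analysis, and the cyclic shift of components is then outsourced to the known description of $\Psi^e_{{\bf v}\to\tau.{\bf v}}$ recorded in \S\ref{crt} (from \cite{JL}). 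Your argument is shorter and avoids the induction entirely (it is hidden in the cited description of the $\tau$-isomorphism), and it is consistent in spirit with how the paper itself deduces Proposition \ref{hup} from the lemma and with the uniform-shift manipulation used later for divided bipartitions; the paper's argument has the merit of being self-contained at the level of node combinatorics. Both are valid; just make sure you note explicitly, as you do, that the identification of ${\bf s}+\tfrac{e}{l}(1,\ldots,1)$ with the orbit element $\tau.{\bf s}$ is where the special form of ${\bf s}$ enters.
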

 
 \begin{proof}
We  argue by induction on $n$.   The lemma is clear for the empty $l$-partition. Assume now that $n>0$. 
 Let $\ulambda \in \Phi_{e, {\bf s}} (n)$ and assume that we have  a sequence $(i_1,\ldots,i_n)\in (\mathbb{Z}/e\mathbb{Z})^n$ such that 
 $$\widetilde{f}^{e,{\bf s}}_{i_1} \ldots \widetilde{f}^{e,{\bf s}}_{i_n} \emptyset =\ulambda.$$
 Set   $\ulambda':= \widetilde{f}^{e,{\bf s}}_{i_2} \ldots \widetilde{f}^{e,{\bf s}}_{i_n} \emptyset$ then $\ulambda'=({\lambda'}^1,\ldots,{\lambda'}^l)$ is in $\Phi_{e, {\bf s}} (n-1)$  and by induction, 
  $\umu':=({\lambda'}^l,{\lambda'}^1,\ldots,{\lambda'}^{l-1})$ is in $\Phi_{e, {\bf s}} (n-1)$ and we have  $\umu'= \widetilde{f}^{e,{\bf s}}_{i_2+e/l} \ldots \widetilde{f}^{e,{\bf s}}_{i_n+e/l} \emptyset$.

  Now, by hypothesis,  we have that $\gamma=[\ulambda]/[\ulambda']$ is a  good addable $i_n$-node for $\ulambda'$ (and $(e,{\bf s})$). Set $(a,b,c):=\gamma$. Then  $\gamma':=[\umu]/[\umu']$ is 
   an addable node for $\umu'$.  We have $\gamma'=(a,b,c+1)$ (where the $3$rd component is understood modulo $l$) and, by our choice of multicharge, it  is a $i_n+e/l$-node. We want to show that this is a  good addable node for $\umu'$.

Assume that $\gamma_1=(a_1,b_1,c_1)$ is a  $i_n$-addable or removable node for $\ulambda'$. Then $\gamma_1 ':=(a_1,b_1,c_1+1)$ 
 is an  addable or removable  node for $\umu'$ (and it is removable, resp. addable, if and only if $\gamma_1$ is).  We have that 
 $$\operatorname{cont} (\gamma_1')= \left\{ \begin{array}{rr}
 \operatorname{cont} (\gamma_1)+e/l & \text{if } c_1\neq l, \\
  \operatorname{cont} (\gamma_1)+e/l-e & \text{if } c_1=l. \end{array}
\right.$$
  Thus, 
 we have $\gamma_1 \prec_{e,{\bf s}} \gamma$ if and only if $\gamma_1' \prec_{e,{\bf s}} \gamma'$. 
 
 Reciprocally, if $\gamma_1'=(a_1,b_1,c_1)$ is a  $i_n+e/l$-addable or removable node for $\umu'$. Then $\gamma_1 :=(a_1,b_1,c_1-1)$ 
 is an  addable or removable for $i_n$-node for $\ulambda'$ (and it is removable, resp. addable, if and only if $\gamma_1'$ is).  We have that 
 $\gamma_1 \prec_{e,{\bf s}} \gamma$ if and only if $\gamma_1' \prec_{e,{\bf s}} \gamma'$. 
  Thus we have $w_{i_1}^{(e,{\bf s})} (\ulambda')=w_{i_1+e/l}^{(e,{\bf s})} (\umu')$. 
 
 This discussion  implies that 
$\gamma'$ is good addable $i_1+e/l$-node for $\umu'$ and thus that 
$$\widetilde{f}^{e,{\bf s}}_{i_1+e/l} \ldots \widetilde{f}^{e,{\bf s}}_{i_n+e/l} \emptyset =\umu,$$
as required.
 \end{proof}
 
 We can now give a proof of Proposition \ref{hup}. Assume that  ${\bf s}\in \mathbb{Z}^l$ is in the orbit of $(0,e/l,\ldots,(l-1)e/l)$ modulo $\widehat{\mathfrak{S}}_l$, that $\ulambda \in \Phi_{e, {\bf s}} (n)$ and that we have  a sequence $(i_1,\ldots,i_n)\in (\mathbb{Z}/e\mathbb{Z})^n$ such that:
$$\widetilde{f}^{e,{\bf s}}_{i_1} \ldots \widetilde{f}^{e,{\bf s}}_{i_n} \emptyset =\ulambda.$$
Set ${\bf s}':=(0,e/l,\ldots,(l-1)e/l)$, then by definition,  we have 
$$\widetilde{f}^{e,{\bf s}'}_{i_1} \ldots \widetilde{f}^{e,{\bf s}'}_{i_n} \emptyset =\Psi^e_{{\bf s}\to{\bf s}'}  (\ulambda).$$
We can thus use Lemma \ref{Lhup} to deduce that there exists $\umu'  \in \Phi_{e, {\bf s}'} (n)$ such that
$$\widetilde{f}^{e,{\bf s}'}_{i_1+e/l} \ldots \widetilde{f}^{e,{\bf s}'}_{i_n+e/l} \emptyset =\umu',$$
and using again our crystal isomorphism, we get that:
$$\widetilde{f}^{e,{\bf s}}_{i_1+e/l} \ldots \widetilde{f}^{e,{\bf s}}_{i_n+e/l} \emptyset =\Psi^{e}_{{\bf s}'\to {\bf s}} (\umu'),$$
so the result follows. Note in addition that the $l$-partition $\umu'$ may be explicitly described  thanks to the explicit description of the crystal isomorphism without the computation of the crystal itself. 
\begin{exa}
Take $l=2$ and assume that $e=4$. We set ${\bf s}=(0,10)$ which is in the orbit  of ${\bf s}'=(0,2)\in \mathcal{A}^4_2$.  We take $\ulambda=(1.1,5.1)\in \Phi^4_{(0,10)}(8)$.
 Note that the multicharge is very dominant for $2$-partitions of rank $8$ so $(1.1,5.1)$ is a Kleshchev bipartition.  If we want to find the bipartition $\umu$ of Proposition \ref{hup}, 
  we first need to find the bipartition $\ulambda'\in  \Phi^4_{(0,2)}(8)$ such that 
$\Psi^4_{{\bf s} \to {\bf s}'} (\ulambda)=\ulambda'$. Using our description of the isomorphisms, we get $\ulambda'=(2.1,5)$. We then have 
 $\umu=\Psi^4_{{\bf s}' \to {\bf s}} (5,2.1)= (4,3.1)$. 
\end{exa}

\subsection{The map $\iota^{{\bf s}}_k$}
We keep the hypothesis that ${\bf s}$ is in the orbit of the multicharge $(0,e/l,\ldots,(l-1)e/l)$ modulo $\widehat{\mathfrak{S}}_l$. 
We now consider another map defined using the crystal.  As explained in \cite{Hl},
 its existence follows from \cite{NS} in the case where $l=2$. Here we will give a general version and  
  we will again give an easy proof using our crystal isomorphism. We will also  
  make things more explicit.
  \begin{Prop}\label{H0} Let $k$ be an integer dividing $l$. Set ${\bf v}:=(0,e/l,\ldots,e(k-1)/l)\in \mathbb{Z}^k.$ 
  There exists a unique map
  $$\iota^{{\bf s}}_k : \Phi_{ke/l, {\bf v}} (n) \to \Phi_{e, {\bf s}} (ln/k)$$
  well-defined as follows. For all $\ulambda \in \Phi_{ke/l, {\bf v}}(n)$, there exists $(i_1,\ldots,i_n)\in \mathbb{Z}^n$ such that 
 $$\widetilde{f}^{ke/l, {\bf v}}_{i_1} \ldots \widetilde{f}^{ke/l, {\bf v}}_{i_n} \emptyset =\ulambda,$$
 (the indices are understood modulo $ke/l$.)
 Then for all such sequences, we have:
 $$\underbrace{\widetilde{f}^{e,{\bf s}}_{{i}_1}\widetilde{f}^{e,{\bf s}}_{{i}_1+ke/l}  \ldots \widetilde{f}^{e,{\bf s}}_{{i}_1+e-ke/l}}_{l/k}  \ldots  \underbrace{\widetilde{f}^{e,{\bf s}}_{{i}_n}\widetilde{f}^{e,{\bf s}}_{{i}_n+ke/l}  \ldots \widetilde{f}^{e,{\bf s}}_{{i}_n+e-ke/l}}_{l/k} \emptyset =\iota^{{\bf s}}_k (\ulambda)$$
  (the indices are understood modulo $e$.)
  \end{Prop}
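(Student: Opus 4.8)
The plan is to mimic the two-step strategy used for Proposition~\ref{hup}: first establish the statement for the dominant representative ${\bf s}_0:=(0,e/l,\ldots,(l-1)e/l)\in\mathcal{A}^e_l$ by an explicit induction, and then transport it to an arbitrary ${\bf s}$ in the orbit by means of the crystal isomorphism $\Psi^e_{{\bf s}_0\to{\bf s}}$. Throughout I write $e':=ke/l$ for the charge of the source Fock space (recall $e'>1$, since it is the charge of a genuine Fock space) and $d:=l/k$ for the number of blocks, so that $e=de'$ and ${\bf s}_0$ decomposes into $d$ consecutive blocks, the $m$-th block ($0\le m\le d-1$) being $me'+{\bf v}$.

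For the dominant case I would first guess the explicit value of the map: I claim that for ${\bf s}={\bf s}_0$ the $l$-partition $\iota^{{\bf s}_0}_k(\ulambda)$ is simply the concatenation of $d$ copies of $\ulambda=(\lambda^1,\ldots,\lambda^k)$, namely $(\lambda^1,\ldots,\lambda^k,\ldots,\lambda^1,\ldots,\lambda^k)$. Note that this has rank $dn=ln/k$ as required and depends only on $\ulambda$, which will yield well-definedness for free. I would then prove by induction on $n$ that if $\widetilde{f}^{e',{\bf v}}_{i_1}\ldots\widetilde{f}^{e',{\bf v}}_{i_n}\uemptyset=\ulambda$, the displayed product of the Proposition produces this concatenation. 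Writing $\ulambda=\widetilde{f}^{e',{\bf v}}_{i_1}\ulambda'$ with $\gamma$ the good addable $i_1$-node added to $\ulambda'$, the induction hypothesis gives the concatenation of $d$ copies of $\ulambda'$, and it remains to show that the block $\widetilde{f}^{e,{\bf s}_0}_{i_1}\widetilde{f}^{e,{\bf s}_0}_{i_1+e'}\ldots\widetilde{f}^{e,{\bf s}_0}_{i_1+e-e'}$ adds one copy of $\gamma$ to each of the $d$ blocks.

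The combinatorial core is as follows. A node sitting at a fixed position of the $m$-th block has content equal to its content in $\ulambda'$ increased by $me'$, hence its residue modulo $e=de'$ is obtained from its source residue by the shift $me'$; in particular the $d$ copies of a single source $i_1$-node are distributed, exactly one per block, among the target residues $i_1,i_1+e',\ldots,i_1+e-e'$ modulo $e$. The two facts I would establish are: (a) for each $j$ the word $w^{(e,{\bf s}_0)}_{i_1+je'}$ of the $d$-fold concatenation of $\ulambda'$ coincides, as a sequence of $A$'s and $R$'s, with the source word $w^{(e',{\bf v})}_{i_1}(\ulambda')$ — this is a content bookkeeping in which one checks that the bijection sending a source $i_1$-node to its unique copy of target residue $i_1+je'$ is $\prec_{{\bf s}_0}$-order preserving, forcing the reduced words to agree and identifying the good addable $(i_1+je')$-node with the copy of $\gamma$ in the appropriate block; and (b) since $e'\ge 2$ the residues $i_1+je'$ for distinct $j$ differ by more than $1$ modulo $e$, so adding a copy of $\gamma$ in one block (which alters only addable/removable nodes of residues $i_1+je'$ and $i_1+je'\pm1$) leaves the good-node computation for every other residue $i_1+j'e'$ untouched, making the order of the $d$ additions irrelevant. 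Combining (a) and (b), the $d$ operators of the block successively add $\gamma$ to the $d$ blocks, completing the induction. I expect (a), the order-preservation of $\prec_{{\bf s}_0}$ under this blow-up with the ties resolved by the component tie-break, to be the main obstacle, whereas (b) is a short residue-separation argument.

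Finally, for general ${\bf s}=w.{\bf s}_0$ I set $\iota^{{\bf s}}_k:=\Psi^e_{{\bf s}_0\to{\bf s}}\circ\iota^{{\bf s}_0}_k$. Given $\ulambda$ and a sequence $(i_1,\ldots,i_n)$ realising it, the dominant case shows that the length-$ln/k$ string obtained by expanding each $i_j$ into $i_j,i_j+e',\ldots,i_j+e-e'$ produces $\iota^{{\bf s}_0}_k(\ulambda)$ in $\mathcal{G}_{e,{\bf s}_0}$; the very definition of $\Psi^e_{{\bf s}_0\to{\bf s}}$ (the same string of $\widetilde{f}$'s applied to $\uemptyset$ in the two crystals yields $\Psi^e$-related vertices) then shows that this same string produces $\Psi^e_{{\bf s}_0\to{\bf s}}(\iota^{{\bf s}_0}_k(\ulambda))=\iota^{{\bf s}}_k(\ulambda)$ in $\mathcal{G}_{e,{\bf s}}$, which is exactly the defining formula. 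Independence of the chosen sequence and membership in $\Phi_{e,{\bf s}}(ln/k)$ are then immediate, since $\iota^{{\bf s}_0}_k(\ulambda)$ depends only on $\ulambda$ and $\Psi^e_{{\bf s}_0\to{\bf s}}$ is a bijection $\Phi_{e,{\bf s}_0}(ln/k)\to\Phi_{e,{\bf s}}(ln/k)$; uniqueness holds because the stated formula determines the map.
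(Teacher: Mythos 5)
Your proposal is correct and follows essentially the same route as the paper: prove the dominant case ${\bf s}_0=(0,e/l,\ldots,(l-1)e/l)$ by induction on $n$, identifying the image as the $l/k$-fold concatenation of $\ulambda$ via an order-preserving bijection between the source $i$-word and the target $(i+je')$-words, then transport along $\Psi^e_{{\bf s}_0\to{\bf s}}$. Your residue-separation argument (b) is merely a slightly more explicit justification of the paper's assertion that each successive copy of $\gamma$ remains a good addable node.
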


In the same spirit as the last result, our strategy consists in proving the result when ${\bf s}$ is in the fundamental domain $\mathcal{A}^e_l$.
\begin{lemma}\label{H1}  
Let $k$ be an integer dividing $l$. Set ${\bf s}=(0,e/l,\ldots,(l-1)e/l)$ and ${\bf v}:=(0,e/l,\ldots,e(k-1)/l)\in \mathbb{Z}^k$. 
  There exists a unique map
  $$\iota^{\bf s}_k : \Phi_{ke/l, {\bf v}} (n) \to \Phi_{e, {\bf s}} (ln/k)$$
  well-defined as follows. For all $\ulambda \in \Phi_{ke/l, {\bf v}} (n)$, there exists $(i_1,\ldots,i_n)\in \mathbb{Z}^n$ such that 
 $$\widetilde{f}^{ke/l, {\bf v}}_{i_1} \ldots \widetilde{f}^{ke/l, {\bf v}}_{i_n} \emptyset =\ulambda$$
 Then for all such sequences, we have:
 $$\underbrace{\widetilde{f}^{e,{\bf s}}_{{i}_1}\widetilde{f}^{e,{\bf s}}_{{i}_1+ke/l}  \ldots \widetilde{f}^{e,{\bf s}}_{{i}_1+e-ke/l}}_{l/k}  \ldots  \underbrace{\widetilde{f}^{e,{\bf s}}_{{i}_n}\widetilde{f}^{e,{\bf s}}_{{i}_n+ke/l}  \ldots \widetilde{f}^{e,{\bf s}}_{{i}_n+e-ke/l}}_{l/k} \emptyset =(\underbrace{\lambda^1,\ldots,\lambda^{k}}_{l/k},\underbrace{\lambda^1,\ldots,\lambda^{k}}_{l/k},\ldots, \underbrace{\lambda^1,\ldots,\lambda^{k}}_{l/k}).$$

\end{lemma}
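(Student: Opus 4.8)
The plan is to mirror the structure of the proof of Lemma \ref{Lhup}: argue by induction on $n$, peeling off the first Kashiwara operator. Throughout I set $d:=e/l$ and $e':=ke/l=kd$, so that $e=(l/k)e'$, ${\bf s}=(0,d,\ldots,(l-1)d)$ and ${\bf v}=(0,d,\ldots,(k-1)d)$. For a $k$-partition $\ulambda=(\lambda^1,\ldots,\lambda^k)$ I write $\umu$ for the target $l$-partition obtained by repeating $\ulambda$ exactly $l/k$ times, and I record the bijection sending a node $(a,b,r)$ of $\ulambda$ (with $1\le r\le k$) to the $l/k$ nodes $(a,b,qk+r)$, $q=0,\ldots,l/k-1$, of $\umu$, which I call its copies. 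The base case $n=0$ is the empty multipartition. For the inductive step I write $\ulambda=\widetilde{f}^{e',{\bf v}}_{i_1}\ulambda'$, let $\gamma$ be the good addable $i_1$-node of $\ulambda'$, and let $\umu'$ be the $(l/k)$-fold repetition of $\ulambda'$, which by induction equals the image of the block-word for $(i_2,\ldots,i_n)$. It then suffices to prove that the single block $\widetilde{f}^{e,{\bf s}}_{i_1}\widetilde{f}^{e,{\bf s}}_{i_1+e'}\cdots\widetilde{f}^{e,{\bf s}}_{i_1+(l/k-1)e'}$ transforms $\umu'$ into $\umu$ by successively adding the $l/k$ copies of $\gamma$, each as a good addable node.

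First I would record the content identity $\operatorname{cont}_{\bf s}(a,b,qk+r)=\operatorname{cont}_{\bf v}(a,b,r)+qe'$, immediate from $s_{qk+r}=(qk+r-1)d=(r-1)d+qe'$. Consequently the $l/k$ copies of an $i_1$-node of $\ulambda'$ carry residues $\operatorname{cont}_{\bf v}+qe'\pmod e$, and as $q$ runs over $0,\ldots,l/k-1$ these are exactly the block residues $\{i_1,i_1+e',\ldots,i_1+(l/k-1)e'\}\pmod e$. The heart of the argument is then to show that, for each fixed $p$, sending an addable or removable $i_1$-node $\delta$ of $\ulambda'$ to its unique copy of residue $i_1+pe'$ is an order-isomorphism $(\{i_1\text{-nodes of }\ulambda'\},\prec_{\bf v})\to(\{(i_1+pe')\text{-nodes of }\umu'\},\prec_{\bf s})$ preserving the labels $A$ and $R$. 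Addability and removability are clearly preserved copywise, and the relevant copy of $\delta$ has $\prec_{\bf s}$-content equal to the unique representative of $i_1+pe'\pmod e$ in the window $[\operatorname{cont}_{\bf v}(\delta),\operatorname{cont}_{\bf v}(\delta)+e)$; monotonicity of this window representative in $\operatorname{cont}_{\bf v}(\delta)$, together with a short check of the tie-breaking rule on the third coordinate when two copies share a content, yields the order-isomorphism. It follows that $w^{(e,{\bf s})}_{i_1+pe'}(\umu')=w^{(e',{\bf v})}_{i_1}(\ulambda')$, so the good addable $(i_1+pe')$-node of $\umu'$ is precisely the copy of $\gamma$ in component $q_\gamma(p)k+r$, where $q_\gamma(p):=(p-t_\gamma)\bmod(l/k)$ and $t_\gamma:=(\operatorname{cont}_{\bf v}(\gamma)-i_1)/e'$.

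The second ingredient is a non-interference observation internal to the block: since $e'\ge 2$, adding a node of residue $\rho$ only alters the addable and removable nodes of residues $\rho$ and $\rho\pm1$, and no two distinct block residues differ by $\pm1$ modulo $e$. As the block applies each residue $i_1+pe'$ exactly once, running $p$ from $l/k-1$ down to $0$, the set of addable and removable $(i_1+pe')$-nodes at the moment $\widetilde{f}^{e,{\bf s}}_{i_1+pe'}$ is applied is still the one computed from $\umu'$; hence at each step the operator adds exactly the copy of $\gamma$ in component $q_\gamma(p)k+r$, as a good node. Because $p\mapsto q_\gamma(p)$ is a bijection of $\{0,\ldots,l/k-1\}$, the block adds all $l/k$ copies of $\gamma$ and therefore carries $\umu'$ to $\umu$, completing the induction. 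Well-definedness and uniqueness are then automatic, since the output depends only on $\ulambda$ and not on the chosen sequence.

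The main obstacle is the order-isomorphism claim of the second paragraph: the shifts by $qe'$ can a priori reorder nodes whose $\prec_{\bf v}$-contents are close, so the argument must control precisely how the window representative and the tie-breaking on the third coordinate interact (in particular, equal $\prec_{\bf s}$-contents force a copy-index gap $q_1>q_2$, which is exactly what makes the components compare the right way). Once that combinatorial comparison is settled, the non-interference step and the two nested inductions are routine.
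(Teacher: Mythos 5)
Your proof is correct and follows essentially the same route as the paper's: induction on $n$, an order-preserving bijection between the addable/removable $i_1$-nodes of $\ulambda'$ and the $(i_1+p\,ke/l)$-nodes of the repeated multipartition (so the corresponding $AR$-words coincide), and then the sequential addition of the $l/k$ copies of the good node within a single block. The only cosmetic difference is that you justify the block step by one non-interference observation (the block residues differ by nonzero multiples of $ke/l\geq 2$, so no $\pm 1$ collisions occur), whereas the paper re-verifies goodness at each intermediate multipartition $\ulambda[j]$ by repeating the same content comparison.
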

\begin{proof}
We again argue by induction on $n\in \mathbb{Z}_{\geq 0}$.   The lemma is clear for the empty $l$-partition. Assume now that $n>0$. 
 Let $\ulambda \in \Phi_{ke/l, {\bf v}} (n)$  and assume that we have  a sequence $(i_1,\ldots,i_n)\in \mathbb{Z}^n$ such that 
 $$\widetilde{f}^{ke/l, {\bf v}}_{i_1} \ldots \widetilde{f}^{ke/l, {\bf v}}_{i_n} .\emptyset =\ulambda$$
 Set $\ulambda':=\widetilde{f}^{ke/l, {\bf v}}_{i_2} \ldots \widetilde{f}^{ke/l, {\bf v}}_{i_n} .\emptyset $. 
 By induction,  we have:
 
 $$\underbrace{\widetilde{f}^{e,{\bf s}}_{{i}_1}\widetilde{f}^{e,{\bf s}}_{{i}_1+ke/l}  \ldots \widetilde{f}^{e,{\bf s}}_{{i}_1+e-ke/l}}_{l/k}  \ldots  \underbrace{\widetilde{f}^{e,{\bf s}}_{{i}_n}\widetilde{f}^{e,{\bf s}}_{{i}_n+ke/l}  \ldots \widetilde{f}^{e,{\bf s}}_{{i}_n+e-ke/l}}_{l/k}. \emptyset =({\lambda'}^1,\ldots,{\lambda'}^{k},{\lambda'}^1,\ldots,{\lambda'}^{k},\ldots, {\lambda'}^1,\ldots,{\lambda'}^{k})$$
 Denote 
 $$\ulambda[0]:=({\lambda'}^1,\ldots,{\lambda'}^{k},{\lambda'}^1,\ldots,{\lambda'}^{k},\ldots, {\lambda'}^1,\ldots,{\lambda'}^{k})$$

Set $\gamma=[\ulambda]/[\ulambda']$ and let $(a,b,c):=\gamma$. This is a good addable $i_1+(ke/l) \mathbb{Z}$-node for $\ulambda'$. We have by definition $b-a+(c-1)e/l \equiv i_1+(ke/l)\mathbb{Z}$. 
 So there exists $j\in \{0,1,\ldots,l/k-1\}$ such that  $b-a+(c-1)e/l=i_1-j(ke/l)+e\mathbb{Z}$. We thus have $b-a+(c-1+(j-1)k)e/l=i_1-(ke/l)+e\mathbb{Z}$. 
 Let use denote $\gamma_j:=(a,b,c+(j-1)k)$ (where the $3$rd component is understood modulo $l$). 
We have that the residue of $\gamma_j$ is $i_1-(ke/l)+e\mathbb{Z}$ for $\ulambda[0]$ and the multicharge ${\bf s}$.

 Now assume that $\eta=(a',b',c')$ is an addable  or a removable  $i_1-(ke/l) +e\mathbb{Z}$-node for $\ulambda$, different from $\gamma$. As above,  
 there exists  $j'\in \{1,\dots,l\}$ such that $\eta_{j'}:=(a',b',j'-1)$ is an addable or removable  $i_1-ke/l +e \mathbb{Z}$-node for $\ulambda[0]$  
   (and removable if and only if $\eta$ is).

   In addition,  by our definition of $\prec_{.}$, 
we have $\eta \prec_{{\bf v}} \gamma $ if and only is $\eta_{j'} \prec_{{\bf s}} \gamma_j $. 
Reciprocally, all the $i_1-ke/l +e \mathbb{Z}$-nodes are obtained in this way. 

This discussion implies that $\gamma_j$ is a good addable $i_1-ke/l$-node for $\ulambda[0]$ because $\gamma$ is a good one for $\ulambda$. We denote by $\ulambda[1]$ the $l$-partition obtained from this one by adding $\gamma_j$ to $\ulambda[0]$.  We thus have  $\ulambda[1]=\widetilde{f}^{e,{\bf s}}_{{i}_1+e-ke/l}  \ulambda' \neq 0$.

Let us now consider $\gamma_2:=(a,b,(c-1+(j-2)k))$ (where the $3$rd component is understood modulo $e$). It is an addable $i_1+(l-2)e/l+e \mathbb{Z}$-node   for  $\ulambda[1]$ and by exactly the same argument as above, we see that this is a good addable node.  Let $\ulambda[2]$ be the $l$-partition  obtained by  adding this node to $\ulambda[1]$. We obtain $ \widetilde{f}^{e,{\bf s}}_{i_1 +e-2ke/l} \widetilde{f}^{e,{\bf s}}_{i_1 +e-ke/l}  \ulambda' =\ulambda[2]$.
Continuing in this way we deduce 
$ \widetilde{f}^{e,{\bf s}}_{i_1} \ldots \widetilde{f}^{e,{\bf s}}_{i_1 +(l-2)e/l} \widetilde{f}^{e,{\bf s}}_{i_1 +(l-1)e/l}  \ulambda' =\ulambda$, as required.

\end{proof}

One can now give a general proof of the proposition. Assume that  ${\bf s}\in \mathbb{Z}^l$ is in the orbit of $(0,e/l,\ldots,(l-1)e/l)$, that $\ulambda \in \Phi_{e, {\bf s}} (n)$ and that we have  a sequence  $(i_1,\ldots,i_n)\in \mathbb{Z}^n$ such that 
 $$\widetilde{f}^{ke/l, {\bf v}}_{i_1} \ldots \widetilde{f}^{ke/l, {\bf v}}_{i_n} \emptyset =\ulambda.$$
Then, by the above lemma, if we set  ${\bf s}'=(0,e/l,\ldots,(l-1)e/l)$, we have 
$$\underbrace{\widetilde{f}^{e,{\bf s}'}_{{i}_1}\widetilde{f}^{e,{\bf s}'}_{{i}_1+ke/l}  \ldots \widetilde{f}^{e,{\bf s}'}_{{i}_1+e-ke/l}}_{l/k}  \ldots  \underbrace{\widetilde{f}^{e,{\bf s}'}_{{i}_n}\widetilde{f}^{e,{\bf s}'}_{{i}_n+ke/l}  \ldots \widetilde{f}^{e,{\bf s}'}_{{i}_n+e-ke/l}}_{l/k} \emptyset =(\lambda^1,\ldots,\lambda^{k},\lambda^1,\ldots,\lambda^{k},\ldots, \lambda^1,\ldots,\lambda^{k})$$
and thus one can conclude that 
$$\underbrace{\widetilde{f}^{e,{\bf s}}_{{i}_1}\widetilde{f}^{e,{\bf s}}_{{i}_1+ke/l}  \ldots \widetilde{f}^{e,{\bf s}}_{{i}_1+e-ke/l}}_{l/k}  \ldots  \underbrace{\widetilde{f}^{e,{\bf s}}_{{i}_n}\widetilde{f}^{e,{\bf s}}_{{i}_n+ke/l}  \ldots \widetilde{f}^{e,{\bf s}}_{{i}_n+e-ke/l}}_{l/k} \emptyset =\Psi^{e}_{{\bf s}'\to {\bf s}}(\lambda^1,\ldots,\lambda^{k},\lambda^1,\ldots,\lambda^{k},\ldots, \lambda^1,\ldots,\lambda^{k}).$$
which proves the theorem and also gives en explict way to compute the $l$-partition involved.

\begin{exa}
We take $l=2$ and $e=4$. We set $k=1$. Let $\lambda=(4.3.1)$, this is a $e/2$-regular partition and thus in $\Phi_{2,(0)} (8)$.  By Lemma \ref{H1}, we have
$\iota_1^{(0,2)}(4.3.1)=(4.3.1,4.3.1)$. Then we obtain for example $\iota_1^{(0,22)}(4.3.1)=\Psi^4_{(0,2)\to (0,22)} (4.3.1,4.3.1)=(3.2.1,4.3.2.1)$. 

\end{exa}

  \section{Hecke algebras of type $G(p,p,n)$}
In this part, we apply the results above to recover  and generalize some of the results of \cite{Hugp} and \cite{Hl}  and give precisions on them.  We will freely use the results in \cite{GwJ}. 
\subsection{Definition}

Let $\eta\in \mathbb{C}^{\times}$. Assume that  $n>2$. 
Let ${\bf s}=(s_1,\ldots,s_l)\in \mathbb{Z}^l$ and let $\eta\in \mathbb{C}^{\times}$.  The cyclotomic Hecke algebra   $\mathcal{H}_n({\bf s})$ of type $G(l,1,n)$ (also known as Ariki-Koike algebra)  is the  $\mathbb{C}$-algebra with a presentation by:
\begin{itemize}
\item generators : $T_0$, $T_1$, ..., $T_{n-1}$,
\item relations :
$$\begin{array}{rcl}
(T_{0}-\eta^{s_1})\ldots (T_{0}-\eta^{s_l})&=&0\\
(T_{i}-\eta)(T_{i}+1)&=&0 \ (1\leq i\leq n-1) \\
(T_{0}T_{1})^{2}&=&(T_{1}T_{0})^{2}\\
T_{i}T_{i+1}T_{i}&=&T_{i+1}T_{i}T_{i+1}\ (1\leq i<n)\\
T_{i}T_{j}&=&T_{j}T_{i}\ ( j\geq i+2).
\end{array}$$
 
 \end{itemize}
The cyclotomic Hecke algebra   $\mathcal{H}_n'$ of type $G(l,l,n)$ is the  $\mathbb{C}$-algebra with a presentation by :
\begin{itemize}
\item generators : $T_0$, $T_1$, ..., $T_{n-1}$,
\item relations :
\begin{align*}
 &   (T_i-{\eta})(T_i+1)=0& & \textrm{for}\    0\leq{i}\leq{n-1},&\\
 & T_iT_{i+1}T_i=T_{i+1}T_iT_{i+1}& &\textrm{for}\ 1\leq{i}\leq{n-2}, &\\
&  T_0T_{2}T_0=T_{2}T_0T_{2},& & &\\
 &   (T_1T_{0}T_2)^2=(T_{2}T_1T_{0})^2,& & &  \\
 & T_0T_j=T_jT_0& &\textrm{for}\ j>2, &\\
&  T_iT_j=T_jT_i& &\textrm{for}\ i>0\ \textrm{et}\ j>i+1, &\\
  & \underbrace{T_0T_1T_0T_1...}_{l\text{ terms}}=\underbrace{T_1T_0T_1T_0...}_{l\text{ terms}}.&& & \\
 \end{align*}
 
 \end{itemize}
From now, we assume that $\eta$ is a primitive root of order $e>1$. 
Let ${\bf s}\in \mathbb{Z}^l$ be in the orbit of $(0,e/l,\ldots,(l-1)e/l)$ then 
 the subalgebra of $\mathcal{H}_n :=\mathcal{H}_n (\bf s)$ generated by $\{T_0:=\widetilde{T_0}^{-1}T_1\widetilde{T_0},T_1,...,T_{n-1}\}$ 
is isomorphic to $\mathcal{H}_n'$. Moreover $\mathcal{H}_n$  is   $\mathbb{Z}/l\mathbb{Z}$-graded with respect to   $\mathcal{H}_n'$    with gradation 
$$\mathcal{H}_n=\bigoplus_{j=0}^{l-1}{\widetilde{T_0}^j \mathcal{H}'_{n}}.$$
As a consequence, one may use Clifford Theory to obtain results for the representation theory of  $\mathcal{H}_n'$  from the one of  $\mathcal{H}_n$.  
 To do this, we first need to recall some known results on the representation theory of $\mathcal{H}_n$

\subsection{Simple $\mathcal{H}_n$-modules}

\begin{abs} The classification of the simple $\mathcal{H}_n$-modules that we need comes from the theory of basic sets. A complete review of this can be found in \cite{GJ}
 but we quickly recall what we need here. One can define a certain set of finite dimensional 
 $\mathcal{H}_n$-modules which are parametrized by the set of $l$-partitions, they are called Specht modules
 $$\{ S^{\ulambda}\ |\ \ulambda \in \Pi^l (n)\}.$$
 These modules are non simple  (nor semisimple) in general but we have  associated composition series. Let us denote by 
 $[S^{\ulambda} : M]$ the multiplicity of $M\in \operatorname{Irr} (\mathcal{H}_n)$ in a composition series for $S^{\ulambda}$  (this is well-defined by the Jordan-H\"older theorem). Then the matrix defined by:
$$\mathcal{D} :=( [S^{\ulambda} : M])_{\ulambda \in \Pi^l( n),M\in \operatorname{Irr} (\mathcal{H}_n)}$$
controls a part of the representation theory of $\mathcal{H}_n$.This is called the {\it decomposition matrix}. 
\end{abs}
\begin{abs}  We here follow \cite[Ch.5, Ch.6]{GJ}.   
  Then one can define a pre-order $\ll_{\bf s}$ on the set of  
 $l$-partitions which depends on the choice of ${\bf s}$. We don't give the definition of this pre-order here, all we need to know is the following theorem (see 
 \cite[\S 6.7]{GJ}). 
 \end{abs}
  \begin{Th}\label{basic} Under the above hypotheses,   for all $M\in \operatorname{Irr} ( \mathcal{H}_n)$, 
  \begin{enumerate}
  \item there exists $\ulambda_M\in \Phi_{{\bf s},e} (n)$  such that $[ S^{\ulambda_M} : M]=1$,
  \item for all $\mu\vdash_l n$, if $[ S^{\umu} : M]\neq 0$ then $\umu\ll_{\bf m} \ulambda_M$.
  \end{enumerate}
  The map $M\mapsto \ulambda_M$ is injective. As a consequence, if for all $M\in \operatorname{Irr} ( \mathcal{H}_n)$ we denote  $D_{{\bf s},e}^{\ulambda_M}:=M$, we have:
$$\operatorname{Irr} (\mathcal{H}_n)=\{ D_{{\bf s},e}^\umu \ |\ \umu \in \Phi_{{\bf s},e} (n)  \}.$$
  \end{Th}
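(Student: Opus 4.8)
The plan is to reduce everything to the geometry of the canonical (lower global) basis of the highest weight module generated by $\uemptyset$, via the Lascoux--Leclerc--Thibon--Ariki theorem. The starting point is that, for the parameters fixed by ${\bf s}$ and by the primitive $e$-th root of unity $\eta$, the submodule $V\subseteq\mathcal{F}_q$ generated by $\uemptyset$ is the irreducible highest weight module of weight $\Lambda_{s_1}+\cdots+\Lambda_{s_l}$, and it carries a canonical basis $\{G(\ulambda)\}$. By Kashiwara's correspondence between a crystal and its global basis, these basis vectors are indexed precisely by the vertices of the connected component of $\mathcal{G}_{e,{\bf s}}$ containing $\uemptyset$, that is, by $\ulambda\in\Phi_{{\bf s},e}(n)$ in each rank $n$; this is exactly why the Uglov $l$-partitions, and no others, end up labelling the simple modules.

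First I would expand each canonical basis vector in the standard basis of $l$-partitions,
\[
G(\ulambda) = \ulambda + \sum_{\umu \neq \ulambda} d_{\umu,\ulambda}(q)\,\umu, \qquad d_{\umu,\ulambda}(q)\in q\,\mathbb{Z}[q],
\]
which is the defining characterisation of the canonical basis (bar-invariance together with $G(\ulambda)\equiv\ulambda$ modulo $q$ times the integral lattice). Next I would invoke Ariki's theorem in the form recalled in \cite[Ch.~6]{GJ}: specialising at $q=1$ recovers the decomposition numbers,
\[
[S^{\umu}:D_{{\bf s},e}^{\ulambda}] = d_{\umu,\ulambda}(1) \qquad (\ulambda \in \Phi_{{\bf s},e}(n),\ \umu\vdash_l n).
\]
Taking $\umu=\ulambda$ gives $[S^{\ulambda}:D_{{\bf s},e}^{\ulambda}]=1$, which yields (1) on setting $\ulambda_M := \ulambda$ for $M=D_{{\bf s},e}^{\ulambda}$; since the $G(\ulambda)$ form a basis of $V$ indexed by $\Phi_{{\bf s},e}(n)$, the assignment $M\mapsto\ulambda_M$ is a bijection onto $\Phi_{{\bf s},e}(n)$, giving both the injectivity and the displayed description of $\Irr(\mathcal{H}_n)$.

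The step carrying the real content, and the main obstacle, is (2): one must show that $d_{\umu,\ulambda}(q)\neq 0$ (with $\umu\neq\ulambda$) forces $\umu\ll_{\bf s}\ulambda$, i.e.\ that the support of the canonical basis refines the combinatorial preorder $\ll_{\bf s}$. This is not formal; it amounts to matching the order in which standard basis vectors are produced by the Kashiwara operators $\widetilde{f}^{e,{\bf s}}_i$ --- governed by the content order $\prec_{\bf s}$ on addable and removable nodes --- with the definition of $\ll_{\bf s}$ through the $a$-function, and this compatibility is precisely the technical heart of \cite[\S 6.7]{GJ}. Granting it, if $[S^{\umu}:M]\neq 0$ then $d_{\umu,\ulambda_M}(1)\neq 0$, hence $d_{\umu,\ulambda_M}(q)\neq 0$, hence $\umu\ll_{\bf s}\ulambda_M$, which establishes (2) and completes the proof.
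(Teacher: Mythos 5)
The paper does not prove this theorem at all: it is imported verbatim from \cite[\S 6.7]{GJ} (``all we need to know is the following theorem''), so there is no internal argument to compare yours against. Your outline is indeed the standard route by which the result is established in the literature --- unitriangularity of the canonical basis of the highest weight submodule of $\mathcal{F}_q$, Ariki's theorem identifying $d_{\umu,\ulambda}(1)$ with the decomposition numbers over $\mathbb{C}$, and then reading off (1), the injectivity, and the parametrisation of $\Irr(\mathcal{H}_n)$ by $\Phi_{{\bf s},e}(n)$ --- and those parts of your argument are sound (the bijectivity onto $\Phi_{{\bf s},e}(n)$ uses, as you implicitly do, that the columns of the decomposition matrix are exactly the specialised canonical basis vectors, which are linearly independent).

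The problem is part (2). You correctly isolate it as ``the step carrying the real content,'' but you then write ``Granting it\ldots'' and refer the compatibility $d_{\umu,\ulambda}(q)\neq 0\Rightarrow\umu\ll_{\bf s}\ulambda$ back to \cite[\S 6.7]{GJ} --- which is the very statement you are supposed to be proving. As a reduction to the literature this is no worse than what the paper itself does, but as a proof it is circular at precisely the decisive point: one must actually establish that the support of each canonical basis vector is controlled by the preorder $\ll_{\bf s}$ (in \cite{GJ} this is done by relating the order in which Uglov's algorithm builds the canonical basis to the $a$-function defining $\ll_{\bf s}$), and nothing in your sketch supplies that. A small further caveat: you should say explicitly that Ariki's theorem is being applied over $\mathbb{C}$ (characteristic zero), since that hypothesis is what makes the identification $[S^{\umu}:D_{{\bf s},e}^{\ulambda}]=d_{\umu,\ulambda}(1)$ valid; it holds here because $\mathcal{H}_n$ is defined over $\mathbb{C}$, but it is not automatic.
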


It is thus important to note that this theorem does not give one way to label the simple modules of the algebra $\mathcal{H}_n$ but in fact several ones: one for each choice of  an 
 element in the orbit of ${\bf s}$ modulo $\widehat{\mathfrak{S}}_l$. It is now natural to ask how all these parametrization are connected.  It turns out that 
  the crystal isomorphisms make the links between them. 
 
 \begin{Prop}[\cite{J}]\label{iso}
 Let ${\bf s}\in \mathbb{Z}^l$ and ${\bf s}'\in \mathbb{Z}^l$ be two multicharge in the same orbit
  then for all $\ulambda \in  \Phi_{{\bf s},e} (n)$, we have $D_{{\bf s},e}^\ulambda= D_{{\bf s}',e}^{\Psi^{e}_{{\bf s}\to {\bf s}'} (\ulambda) }$. 
 \end{Prop}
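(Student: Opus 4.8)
The plan is to reduce the statement to the elementary generators of $\widehat{\mathfrak{S}}_l$, and then to bridge the combinatorics of the crystal isomorphism to the decomposition matrix via Ariki's theorem and Uglov's canonical basis. First, I would record that the two parametrizations produced by Theorem \ref{basic} are parametrizations of literally the same set of simple modules. Indeed, since $\eta$ is a primitive $e$-th root of unity and ${\bf s}'=w.{\bf s}$ for some $w\in\widehat{\mathfrak{S}}_l$, the multisets of cyclotomic parameters $\{\eta^{s_1},\ldots,\eta^{s_l}\}$ and $\{\eta^{s'_1},\ldots,\eta^{s'_l}\}$ agree: the transpositions $\sigma_i$ only permute the $s_j$, while the shifts $s_j\mapsto s_j+e$ act trivially on $\eta^{s_j}$. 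Hence $\mathcal{H}_n({\bf s})$ and $\mathcal{H}_n({\bf s}')$ coincide and $\operatorname{Irr}(\mathcal{H}_n({\bf s}))=\operatorname{Irr}(\mathcal{H}_n({\bf s}'))$, so the content of the proposition is exactly that the induced change of labels is $\Psi^e_{{\bf s}\to{\bf s}'}$.

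Next I would exploit transitivity to cut the problem down to the generators described in \S\ref{aff}. From the defining property of the isomorphisms in \S\ref{crt} one reads off $\Psi^e_{{\bf s}\to{\bf s}''}=\Psi^e_{{\bf s}'\to{\bf s}''}\circ\Psi^e_{{\bf s}\to{\bf s}'}$, since both sides send $\ulambda$ to the $l$-partition reached from $\emptyset$ by the same sequence of operators $\widetilde{f}_{i_j}$. As the desired identity $D^\ulambda_{{\bf s},e}=D^{\Psi^e_{{\bf s}\to{\bf s}'}(\ulambda)}_{{\bf s}',e}$ is itself transitive in the pair $({\bf s},{\bf s}')$, writing $w$ as a product of $\tau$ and the $\sigma_i$ reduces everything to the two cases ${\bf s}'=\tau.{\bf s}$ and ${\bf s}'=\sigma_i.{\bf s}$.

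I would then bring in the representation-theoretic input. By Ariki's theorem (see \cite{A} and \cite[Ch.~6]{GJ}), the decomposition matrix $\mathcal{D}$ of $\mathcal{H}_n$ is obtained by specialising at $q=1$ the coefficients of Uglov's canonical basis of the highest weight component of the Fock space $\mathcal{F}_q$ attached to ${\bf s}$; under this identification the simple module $D^\umu_{{\bf s},e}$ corresponds to the canonical basis vector $G_{\bf s}(\umu)$ sitting at the vertex $\umu$ of $\mathcal{G}_{e,{\bf s}}$, and the triangularity with respect to $\ll_{\bf s}$ in Theorem \ref{basic} is the triangularity of the canonical basis against the standard basis of $l$-partitions. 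Thus the claim becomes the assertion that the canonical-basis identification matches the vertex $\ulambda$ of $\mathcal{G}_{e,{\bf s}}$ with the vertex $\Psi^e_{{\bf s}\to{\bf s}'}(\ulambda)$ of $\mathcal{G}_{e,{\bf s}'}$.

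Finally I would prove this last point, which carries the real difficulty. The components generated by $\emptyset$ in the two Fock spaces are both irreducible integrable $\mathcal{U}_q^{\prime}(\widehat{\mathfrak{sl}_e})$-modules of highest weight $\Lambda_{s_1}+\cdots+\Lambda_{s_l}=\Lambda_{s'_1}+\cdots+\Lambda_{s'_l}$ (indices mod $e$), hence canonically isomorphic, and by the defining property in \S\ref{crt} this isomorphism carries the crystal vertex $\ulambda$ to $\Psi^e_{{\bf s}\to{\bf s}'}(\ulambda)$. The hard part will be to upgrade this crystal (i.e. $q=0$) isomorphism to an isomorphism of global bases, namely to show that the module isomorphism sends $G_{\bf s}(\ulambda)$ to $G_{{\bf s}'}(\Psi^e_{{\bf s}\to{\bf s}'}(\ulambda))$ rather than merely matching the underlying crystal graphs. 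For the moves $\tau$ and $\sigma_i$ this is precisely Uglov's compatibility of his canonical basis with the $\widehat{\mathfrak{S}}_l$-action, together with the explicit combinatorics recalled in \cite{JL}; it ultimately rests on the uniqueness of the canonical basis (bar-invariance plus the integral lattice condition), which forces any bar-compatible module isomorphism to respect canonical bases vertex by vertex. Specialising this identification of canonical basis vectors at $q=1$ and comparing the corresponding columns of $\mathcal{D}$ then yields $D^\ulambda_{{\bf s},e}=D^{\Psi^e_{{\bf s}\to{\bf s}'}(\ulambda)}_{{\bf s}',e}$, as desired.
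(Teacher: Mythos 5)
The paper does not actually prove Proposition \ref{iso}: it is imported from \cite{J} with no argument given, so there is no internal proof to compare against. Your sketch follows what is essentially the route of that reference and of \cite[Ch.~6]{GJ}: observe that the two parametrizations label the same algebra's simple modules, reduce by transitivity to the generators $\tau$ and $\sigma_i$, and pass through Ariki's theorem and Uglov's canonical bases. The preliminary reductions (equality of the algebras, compatibility of $\Psi^e_{{\bf s}\to{\bf s}'}$ with composition) are correct and cheap, as you say.

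One correction on where the weight of the argument actually sits. The step you single out as the hard part --- upgrading the crystal isomorphism to an identification $G_{\bf s}(\ulambda)\leftrightarrow G_{{\bf s}'}\bigl(\Psi^e_{{\bf s}\to{\bf s}'}(\ulambda)\bigr)$ of global bases of the abstract module $V(\Lambda)$ --- is in fact the easy part: the lower global basis of an irreducible highest weight module with a chosen highest weight vector is intrinsic (the bar involution, the crystal lattice and the integral form are all determined by the pointed module), so any isomorphism of pointed modules automatically matches the global bases together with their crystal labels; no separate uniqueness argument is needed. What genuinely requires Uglov's work, and what your last sentence passes over, is this: Ariki's theorem reads off decomposition numbers from the expansion of $G_{\bf s}(\ulambda)$ in the \emph{standard basis} of the Fock space $\mathcal{F}_q$ equipped with the ${\bf s}$-action (the Specht labels), and the analogous expansion of $G_{{\bf s}'}\bigl(\Psi^e_{{\bf s}\to{\bf s}'}(\ulambda)\bigr)$ takes place in a different module structure; the two families of $q$-decomposition numbers genuinely differ as polynomials in $q$. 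The statement that they coincide after specialization at $q=1$ (up to the permutation of components of the Specht labels induced by $w$) is a theorem about canonical bases of higher level Fock spaces, not a formal consequence of bar-invariance, and it is exactly the point where the columns of $\mathcal{D}$ for the two labelings are matched. Provided you invoke that result explicitly --- it is the substance of \cite{J} --- your outline closes into a correct proof.
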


\subsection{Restriction of simple $\mathcal{H}_n$-modules}
We here set  ${\bf s}=(0,e/l,\ldots,(l-1)e/l)$. There is a natural action of the cyclic group $\mathbb{Z}/l\mathbb{Z}$ on $\Pi^l (n)$ generated by the following map:
$$(\lambda^{1},\lambda^{2},...,\lambda^{l})\mapsto{(\lambda^{l},\lambda^{1},...,\lambda^{l-2})}.$$
For $\ulambda \in \Pi^l (n)$ we denote by  $\widetilde{{\ulambda}}$ the associated equivalence class. Let 
$$r:=r(\ulambda)=\frac{l}{\textrm{Cardinality of }\ \widetilde{{\ulambda}}}.$$
The following theorem is proved in \cite{GwJ}. 

\begin{Th}
Let $\ulambda \in  \Phi_{{\bf s},e} (n)$ then we have that $\operatorname{Res} (D_{{\bf s},e}^\ulambda)$ is a direct sum of 
 $r(\ulambda)$ simple $\mathcal{H}_n'$-modules. 
\end{Th}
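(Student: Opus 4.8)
The plan is to realize the pair $(\mathcal{H}_n,\mathcal{H}_n')$ as a crossed product by the cyclic group $\mathbb{Z}/l\mathbb{Z}$ and to feed the combinatorics of Hu's map (Proposition \ref{hup}) into the standard Clifford theory of such crossed products. The grading $\mathcal{H}_n=\bigoplus_{j=0}^{l-1}\widetilde{T_0}^{\,j}\mathcal{H}_n'$ exhibits $\mathcal{H}_n$ as a crossed product of $\mathcal{H}_n'$ by $G:=\mathbb{Z}/l\mathbb{Z}$, since $\widetilde{T_0}$ is invertible with $\widetilde{T_0}^{\,l}\in \mathcal{H}_n'$. Clifford theory then gives, for every $M\in \operatorname{Irr}(\mathcal{H}_n)$, that $\operatorname{Res}(M)$ is semisimple, that its simple constituents are permuted transitively by conjugation by $\widetilde{T_0}$, and that they are pairwise non-isomorphic each occurring once, because $H^2(\mathbb{Z}/l\mathbb{Z},\mathbb{C}^\times)=0$ so there is no ramification. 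Moreover the number of constituents equals the order $|\widehat{G}_M|$ of the inertia subgroup of $M$ in the dual group $\widehat{G}$, which acts on $\operatorname{Irr}(\mathcal{H}_n)$ by twisting the grading; this last identity (number of constituents $=$ inertia order) follows from the orbit–stabilizer bookkeeping relating the $G$-orbit of a constituent to the $\widehat{G}$-orbit of $M$.

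The heart of the argument is to identify this twisting action with the cyclic rotation on labels. Since $\eta$ has order $e$ and ${\bf s}$ lies in the orbit of $(0,e/l,\ldots,(l-1)e/l)$, the eigenvalues $\eta^{s_j}$ of $T_0$ are the $l$-th roots of unity, cyclically permuted by multiplication by $\zeta:=\eta^{e/l}$. Hence $T_0\mapsto \zeta T_0$, $T_i\mapsto T_i$ (for $1\le i\le n-1$), defines an algebra automorphism $\sigma$ of $\mathcal{H}_n$ which fixes $\mathcal{H}_n'$ pointwise and generates the twisting $\widehat{G}$-action. Through Ariki's categorification, where $i$-induction categorifies $\widetilde{f}_i$ and produces the basic-set labeling $M=D_{{\bf s},e}^{\ulambda}$ of Theorem \ref{basic}, the automorphism $\sigma$ intertwines $i$-induction with $(i+e/l)$-induction; therefore $M\mapsto M^{\sigma}$ corresponds on labels to the residue-shift $i_r\mapsto i_r+e/l$, which is exactly the map of Proposition \ref{hup}. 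Using Proposition \ref{iso} together with the crystal isomorphisms to reduce to the representative $(0,e/l,\ldots,(l-1)e/l)\in \mathcal{A}^e_l$, Lemma \ref{Lhup} identifies this map with the cyclic rotation $(\lambda^1,\ldots,\lambda^l)\mapsto(\lambda^l,\lambda^1,\ldots,\lambda^{l-1})$ on $\Phi_{e,{\bf s}}(n)$, which is precisely the $\mathbb{Z}/l\mathbb{Z}$-action defining $\widetilde{\ulambda}$.

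Granting this identification, the inertia subgroup $\widehat{G}_{D^{\ulambda}}$ is the stabilizer of $\ulambda$ under the cyclic rotation, and by orbit–stabilizer its order is $l/|\widetilde{\ulambda}|=r(\ulambda)$. Substituting into the count of the first paragraph shows that $\operatorname{Res}(D_{{\bf s},e}^{\ulambda})$ is a direct sum of $r(\ulambda)$ pairwise non-isomorphic simple $\mathcal{H}_n'$-modules, as claimed. The main obstacle is the middle step: rigorously matching the purely algebraic automorphism $\sigma$ with Hu's combinatorial map at the level of simple modules, i.e. showing that shifting all residues by $e/l$ on the crystal side is realized module-theoretically by $\sigma$ on $\operatorname{Irr}(\mathcal{H}_n)$. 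This is exactly where the crystal isomorphism machinery of \cite{JL} together with Proposition \ref{iso} becomes decisive, since it makes the translation available and reduces the verification to the fundamental-domain case handled in Lemma \ref{Lhup}.
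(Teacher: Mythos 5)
The paper does not actually prove this theorem: it is introduced by the sentence ``The following theorem is proved in \cite{GwJ}'' and imported from that reference, so there is no internal proof to compare yours against. Your outline is, in substance, the argument of \cite{GwJ} (and of \cite{Hugp}): view $\mathcal{H}_n$ as a crossed product of $\mathcal{H}_n'$ by $\mathbb{Z}/l\mathbb{Z}$, invoke Clifford theory for cyclic gradings over $\mathbb{C}$ (semisimplicity and multiplicity-freeness of the restriction, number of constituents equal to the order of the inertia subgroup of $M$ in the dual group acting by twisting), identify the generator of that dual action with the automorphism $\sigma\colon T_0\mapsto \eta^{e/l}T_0$, $T_i\mapsto T_i$, and translate $\sigma$-twisting on labels into the residue shift $i_r\mapsto i_r+e/l$ of Proposition \ref{hup}, hence via Lemma \ref{Lhup} and the crystal isomorphisms into the cyclic rotation of components, whose stabilizer has order $l/|\widetilde{\ulambda}|=r(\ulambda)$. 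As a skeleton this is correct and is the intended route.

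The step you yourself flag as ``the main obstacle'' is, however, a genuine gap and not a detail: the claim that $D_{{\bf s},e}^{\ulambda}\mapsto (D_{{\bf s},e}^{\ulambda})^{\sigma}$ is computed on labels by shifting all residues by $e/l$ does not follow from Theorem \ref{basic}, which defines $\ulambda_M$ purely through unitriangularity of the decomposition matrix and says nothing about induction or restriction. To close it you need (i) the compatibility of the basic-set labeling with the labeling of simple modules by the crystal through socles of $i$-restriction (Ariki's theorem together with the modular branching rules of Grojnowski--Vazirani), and (ii) the verification, on the eigenvalues of the Jucys--Murphy elements, that $\sigma$ conjugates $i$-restriction into $(i+e/l)$-restriction. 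Point (i) is exactly the technical core of the proof in \cite{GwJ} and \cite{Hugp}; without it the purely combinatorial Proposition \ref{hup} cannot be transported to $\operatorname{Irr}(\mathcal{H}_n)$ at all, and the crystal isomorphisms of \cite{JL} do not supply it, since they only compare different labelings with one another. As written, your argument is therefore a correct reduction of the theorem to that branching-rule compatibility rather than a complete proof.
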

It is also possible to show that if $\ulambda$ and $\umu$ are in the same equivalence class then 
 $\operatorname{Res} (D_{{\bf s},e}^\ulambda)$ and  $\operatorname{Res} (D_{{\bf s},e}^\umu)$ are isomorphic. In addition,
  the simple modules appearing in the restriction of the $D_{{\bf s},e}^\ulambda$'s  determined the equivalence class of $\ulambda$.  
 As a consequence, one can obtain 
  a classification of the simple $\mathcal{H}_n$-modules knowing the numbers  $r(\ulambda)$.  Applying Proposition \ref{iso} yields:

\begin{Prop}
Let ${\bf s}'\in \mathbb{Z}^l$ in the same class as ${\bf s}=(0,e/l,\ldots,(l-1)e/l)$. 
Let $\ulambda \in  \Phi_{{\bf s}',e} (n)$ then we have that $\operatorname{Res} (D_{{\bf s}',e}^\ulambda)$ is a direct sum of 
 $r( \Psi^{e}_{{\bf s}\to ,{\bf s}} (\ulambda))$ simple $\mathcal{H}_n'$-modules. 
\end{Prop}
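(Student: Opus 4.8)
The plan is to reduce everything to the case of the distinguished multicharge ${\bf s}=(0,e/l,\ldots,(l-1)e/l)$, for which the splitting of the restriction is already governed by the preceding theorem, and then to transport the answer along the crystal isomorphism. First I would set $\umu:=\Psi^{e}_{{\bf s}'\to {\bf s}}(\ulambda)\in \Phi_{{\bf s},e}(n)$. Since $\Psi^{e}_{{\bf s}'\to {\bf s}}$ and $\Psi^{e}_{{\bf s}\to {\bf s}'}$ are mutually inverse bijections between $\Phi_{{\bf s}',e}(n)$ and $\Phi_{{\bf s},e}(n)$, we have $\Psi^{e}_{{\bf s}\to {\bf s}'}(\umu)=\ulambda$.

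Applying Proposition \ref{iso} to $\umu$ (with the roles of ${\bf s}$ and ${\bf s}'$ exactly as in that statement) then gives the identification of simple $\mathcal{H}_n$-modules
$$D_{{\bf s},e}^{\umu}=D_{{\bf s}',e}^{\Psi^{e}_{{\bf s}\to {\bf s}'}(\umu)}=D_{{\bf s}',e}^{\ulambda}.$$
In other words, $D_{{\bf s}',e}^{\ulambda}$ and $D_{{\bf s},e}^{\umu}$ are literally the same $\mathcal{H}_n$-module, merely labelled by two different elements of the orbit of ${\bf s}$. I would then observe that the restriction functor $\operatorname{Res}$ from $\mathcal{H}_n$ to the subalgebra isomorphic to $\mathcal{H}_n'$ depends only on the underlying module and not on its combinatorial labelling, so that $\operatorname{Res}(D_{{\bf s}',e}^{\ulambda})=\operatorname{Res}(D_{{\bf s},e}^{\umu})$ as $\mathcal{H}_n'$-modules.

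Since $\umu\in \Phi_{{\bf s},e}(n)$, the preceding theorem (on the restriction of $D_{{\bf s},e}^{\umu}$ for the distinguished multicharge ${\bf s}$) applies verbatim and tells us that $\operatorname{Res}(D_{{\bf s},e}^{\umu})$ is a direct sum of $r(\umu)$ simple $\mathcal{H}_n'$-modules. Substituting $\umu=\Psi^{e}_{{\bf s}'\to {\bf s}}(\ulambda)$ yields that $\operatorname{Res}(D_{{\bf s}',e}^{\ulambda})$ is a direct sum of $r(\Psi^{e}_{{\bf s}'\to {\bf s}}(\ulambda))$ simple $\mathcal{H}_n'$-modules, which is the desired claim (the subscript appearing in the statement, written $\Psi^{e}_{{\bf s}\to ,{\bf s}}$, is a typo and should be read as $\Psi^{e}_{{\bf s}'\to {\bf s}}$).

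There is no serious obstacle here: the argument is a bookkeeping combination of Proposition \ref{iso} with the preceding theorem, together with the elementary fact that restriction is insensitive to the choice of label. The only point requiring genuine care is the direction of the crystal isomorphism — one must start from $\ulambda$ labelled by ${\bf s}'$ and move to the distinguished multicharge ${\bf s}$, where the count of summands is known, rather than the other way around, and correspondingly apply Proposition \ref{iso} to $\umu$ so that the composite $\Psi^{e}_{{\bf s}\to {\bf s}'}\circ \Psi^{e}_{{\bf s}'\to {\bf s}}$ collapses to the identity.
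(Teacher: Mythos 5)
Your argument is correct and is exactly what the paper intends: the paper derives this proposition in one line ("Applying Proposition \ref{iso} yields...") from the preceding restriction theorem for the distinguished multicharge, which is precisely the bookkeeping you carry out in detail with $\umu=\Psi^{e}_{{\bf s}'\to {\bf s}}(\ulambda)$. Your reading of the subscript $\Psi^{e}_{{\bf s}\to ,{\bf s}}$ as a typo for $\Psi^{e}_{{\bf s}'\to {\bf s}}$ is also the right one.
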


The above proposition gives thus an explicit way to find the number of simple modules 
 in the restriction of the simple $\mathcal{H}_n$-modules without refering to the notion of  crystal and 
  for all the known parametrization of the simples. This thus includes the usual  parametrization by set the Kleshchev $l$-partitions 
    using our isomorphisms. 
 \begin{Prop} Let ${\bf s}'\in \mathbb{Z}^l$ in the same class as ${\bf s}=(0,e/l,\ldots,(l-1)e/l)$.
 Let $\ulambda \in  \Phi_{{\bf s}',e} (n)$ then $\operatorname{Res} (D_{{\bf s}',e}^\ulambda)$ 
  splits into a sum of $x$ simple modules if and only if $\ulambda \in \operatorname{Im} (\iota^{{\bf s}'}_{l/x})$ and 
     $\ulambda \notin \operatorname{Im} (\iota^{{\bf s}'}_{l/s})$ for $s>x$.
 
 \end{Prop}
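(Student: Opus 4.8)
The plan is to transport the whole statement, by means of the crystal isomorphisms of \S\ref{crt}, to the case where the multicharge lies in the fundamental domain $\mathcal{A}^e_l$, and there to identify the images of the maps $\iota^{\bf s}_k$ with sets of \emph{periodic} Uglov $l$-partitions; the equivalence then reduces to an elementary divisibility count. Concretely, I would fix ${\bf s}=(0,e/l,\ldots,(l-1)e/l)$ and set $\ulambda_0:=\Psi^e_{{\bf s}'\to{\bf s}}(\ulambda)\in\Phi_{e,{\bf s}}(n)$. By the preceding proposition, $\operatorname{Res}(D_{{\bf s}',e}^{\ulambda})$ is a direct sum of $r(\ulambda_0)$ simple modules, so the left-hand side of the asserted equivalence is exactly $r(\ulambda_0)=x$. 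On the other hand, the proof of Proposition \ref{H0} exhibits the relation $\iota^{{\bf s}'}_k=\Psi^e_{{\bf s}\to{\bf s}'}\circ\iota^{\bf s}_k$, and since $\Psi^e_{{\bf s}\to{\bf s}'}$ is a bijection, $\ulambda\in\operatorname{Im}(\iota^{{\bf s}'}_k)$ if and only if $\ulambda_0\in\operatorname{Im}(\iota^{\bf s}_k)$. Thus all the data of the statement are faithfully pulled back to $\ulambda_0$ and the multicharge ${\bf s}\in\mathcal{A}^e_l$, with which I work from now on.

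The heart of the argument is the description, for $k\mid l$, of the image as the set of $k$-periodic members,
$$\operatorname{Im}(\iota^{\bf s}_k)=\{\,\umu=(\mu^1,\ldots,\mu^l)\in\Phi_{e,{\bf s}}\ :\ \mu^{i+k}=\mu^i\ \text{for all }i\ (\text{indices mod }l)\,\}.$$
The inclusion $\subseteq$ is immediate from Lemma \ref{H1}, whose output is visibly $k$-periodic. For $\supseteq$ I would take a $k$-periodic $\umu\in\Phi_{e,{\bf s}}$ and verify that its reduction $(\mu^1,\ldots,\mu^k)$ satisfies the FLOTW conditions of \S\ref{flotw} for the charge ${\bf v}=(0,e/l,\ldots,(k-1)e/l)\in\mathcal{A}^{ke/l}_k$, so that $\iota^{\bf s}_k$ carries this reduction back to $\umu$. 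Conditions (1) and (2) transfer at once: all consecutive gaps $s_{j+1}-s_j$ equal $e/l$, the conditions (1) for $j\le k-1$ are literally the conditions (1) for ${\bf v}$, periodicity makes condition (1) for $j=k$ coincide with condition (2) for ${\bf v}$, and the remaining instances become redundant.

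The one genuinely delicate point — and the step I expect to be the main obstacle — is condition (3), properness of the residue sets. Here I would use that, because $l\mid e$ and $k\mid l$, reduction modulo $ke/l$ is a surjective homomorphism $\pi\colon\mathbb{Z}/e\mathbb{Z}\to\mathbb{Z}/(ke/l)\mathbb{Z}$ with kernel generated by $ke/l$. Since passing from component $j$ to component $j+k$ preserves the part but shifts its content by exactly $ke/l$, the $k$-periodicity of $\umu$ forces, for each fixed part value, the residue set attached to $(e,{\bf s})$ to be precisely the $\pi$-saturation (the full preimage) of the residue set attached to $(ke/l,{\bf v})$. As $\pi$ is onto, such a saturated set equals $\mathbb{Z}/e\mathbb{Z}$ if and only if its image equals $\mathbb{Z}/(ke/l)\mathbb{Z}$; hence condition (3) for $(e,{\bf s})$ on $\umu$ is equivalent to condition (3) for $(ke/l,{\bf v})$ on the reduction. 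This completes $\supseteq$ and hence the displayed characterization of the image.

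It remains to assemble the divisibility bookkeeping. In the fundamental domain Hu's map is the rotation $(\mu^1,\ldots,\mu^l)\mapsto(\mu^l,\mu^1,\ldots,\mu^{l-1})$ by Lemma \ref{Lhup}, so the $\mathbb{Z}/l\mathbb{Z}$-orbit of $\ulambda_0$ has cardinality equal to its least period $d$; thus $d\mid l$ and $r(\ulambda_0)=l/d$. By the image description, $\ulambda_0\in\operatorname{Im}(\iota^{\bf s}_k)$ if and only if $d\mid k$. Consequently $r(\ulambda_0)=x$ (that is $d=l/x$) yields $d\mid(l/x)$, whence $\ulambda_0\in\operatorname{Im}(\iota^{\bf s}_{l/x})$, while for every divisor $s>x$ of $l$ one has $l/s<d$, so $d\nmid(l/s)$ and $\ulambda_0\notin\operatorname{Im}(\iota^{\bf s}_{l/s})$. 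Conversely, if $\ulambda_0\in\operatorname{Im}(\iota^{\bf s}_{l/x})$ then $d\mid(l/x)$ gives $d\le l/x$; were $d<l/x$, the divisor $s=l/d>x$ of $l$ would satisfy $l/s=d$ and hence $\ulambda_0\in\operatorname{Im}(\iota^{\bf s}_{l/s})$, contradicting the second hypothesis. Therefore $d=l/x$, i.e. $r(\ulambda_0)=x$, which by the reduction of the first paragraph is exactly the claimed equivalence.
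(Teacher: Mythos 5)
Your proof is correct and follows essentially the same route as the paper: reduce to the fundamental-domain multicharge $(0,e/l,\ldots,(l-1)e/l)$ via $\Psi^e_{{\bf s}'\to{\bf s}}$, identify $\operatorname{Im}(\iota^{\bf s}_k)$ with the $k$-periodic Uglov $l$-partitions, and conclude from the count $r(\ulambda_0)$ of the previous proposition. Your only real addition is that you carefully prove the reverse inclusion of the image description (that every $k$-periodic FLOTW $l$-partition comes from a FLOTW $k$-partition, via conditions (1)--(3) of \S\ref{flotw}), a point the paper asserts as an immediate consequence of Lemma \ref{H1} without spelling it out.
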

 \begin{proof}
Take first ${\bf s}'={\bf s}$.  Then by Lemma \ref{H1}, we have that 
$\ulambda \in \operatorname{Im} (\iota^{{\bf s}}_x)$  if and only if  $\ulambda$ is of the form 
$(\lambda^1,\ldots,\lambda^{k},\lambda^1,\ldots,\lambda^{k},\ldots, \lambda^1,\ldots,\lambda^{k})$ where $k=l/x$ and the result follows from the last proposition. 

Now, if ${\bf s}'\in \mathbb{Z}^l$ in the same class as ${\bf s}=(0,e/l,\ldots,(l-1)e/l)$, then  for all $\ulambda \in  \Phi_{{\bf s},e} (n)$, we have $D_{{\bf s},e}^\ulambda= D_{{\bf s}',e}^{\Psi^{e}_{{\bf s}\to {\bf s}'} (\ulambda) }$ and one can conclude noticing that 
$$\ulambda \in \operatorname{Im} (\iota^{{\bf s}}_{l/x})\iff \Psi^e_{{\bf s}'\to {\bf s}} (\ulambda) \in \operatorname{Im} (\iota^{{\bf s}'}_{l/x}).$$
by the definition of the maps and the properties of crystal isomorphisms.

 \end{proof}
 \begin{exa}
 Take $l=4$ and $e=4$. Let ${\bf s}=(0,1,2,3)$. Then we have $\ulambda:=(3.1,2,3.1,2)\in \Phi_{e,{\bf s}} (16)$. 
  Then we have that $r(\ulambda)=2$, this implies that $\operatorname{Res} (D_{{\bf s},e}^\ulambda)$  splits 
    in two simple $\mathcal{H}_n '$-modules. Note that we have $\ulambda=\iota_{2}^{\bf s} (3.1,2)$.  
 Set ${\bf s}=(0,13,26,39)$, this multicharge is very dominant  and in the same orbit as ${\bf s}$. One can compute 
  $\Psi^e_{{\bf s} \to {\bf s}'} (\ulambda)$ and we obtain $\ulambda'=(2.1,1,3.2,2.1)$. So we have that 
  $\operatorname{Res} (D_{{\bf s}',e}^{\ulambda'})$  splits 
    in two simple $\mathcal{H}_n '$-modules.
 \end{exa}

\subsection{The case $l=2$}
We assume in this part that $l=2$ and that $e$ is odd. Then one can apply the results above. In particular, for all ${\bf s}$ in the orbit 
 of $(0,e/2)$ modulo $\widehat{\mathfrak{S}}_2$ and $\ulambda \in \Phi_{e,{\bf s}} (n)$, the $\mathcal{H}_n'$-module $\operatorname{Res} (D_{{\bf s},e}^\ulambda)$ 
  splits into one or two simple modules. 

The aim is to study the 
 set of Uglov bipartitions  $\ulambda \in \Phi_{e,{\bf s}} (n)$ such that $\operatorname{Res} (D_{{\bf s},e}^\ulambda)$  splits into a sum of two simple modules.
   Such bipartitions will be called  {\it divided bipartitions} for the multicharge ${\bf s}$. This notion strongly depends on ${\bf s}$. In the case where 
    ${\bf s}=(0,e/2)$, by the results above,  these bipartitions correspond exactly  to the bipartitions of the form $(\lambda,\lambda)$ in $\Phi_{e,{\bf s}} (n)$. 
     This is exactly the set of bipartitions  $(\lambda,\lambda)$ where $\lambda$ is an $e/2$-regular partition of $n/2$ by \S \ref{flotw}.
 
% 
%
%In this case  $\ulambda$ is in $\Phi_{e,(0,e/2)}$ ans also in $\Phi_{e/2,(0,0)}$ and moreover it is in bijection with the 
%  FLOTW $l$-partition $(\mu,\mu)$ in $\Phi_{e/2,(0,0)}$ and $\Phi_{e,(0,e/2)}$.
%

\begin{Prop} 
 Let $N \in \mathbb{Z}_{\geq 0}$. 
We have that $(\lambda^1,\lambda^2)$ is a divided bipartition for ${\bf s}=(0,e/2+Ne)$ if and only if  we have 
$$\Psi^e_{(0,e/2+Ne)\to (Ne,e/2)} (\lambda^1,\lambda^2)= (\lambda^2,\lambda^1)$$
\end{Prop}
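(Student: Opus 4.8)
The plan is to reduce both sides of the asserted equivalence to a single condition on the FLOTW multicharge ${\bf s}_0:=(0,e/2)$, and then to exploit the fact that for $l=2$ the isomorphism $\Psi^e_{{\bf v}\to\tau.{\bf v}}$ of \S\ref{crt} is exactly the swap of components. Write ${\bf s}=(0,e/2+Ne)$ and ${\bf t}:=(Ne,e/2)$. First I would record the divided criterion in the form I need: specialising the splitting proposition above to $l=2$ (so $x\in\{1,2\}$) and using Lemma \ref{H1} and Proposition \ref{H0}, $(\lambda^1,\lambda^2)\in\Phi_{e,{\bf s}}(n)$ is divided iff it lies in $\operatorname{Im}(\iota^{{\bf s}}_1)=\Psi^e_{{\bf s}_0\to{\bf s}}(\{(\mu,\mu)\})$, i.e. iff $P:=\Psi^e_{{\bf s}\to{\bf s}_0}(\lambda^1,\lambda^2)$ is a symmetric bipartition. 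On the target side, since $\tau.{\bf s}=(e/2+Ne,e)$ and $\Psi^e_{{\bf s}\to\tau.{\bf s}}(\lambda^1,\lambda^2)=(\lambda^2,\lambda^1)$ by \S\ref{crt}, the functoriality of the crystal isomorphisms ($\Psi^e_{{\bf s}\to{\bf t}}=\Psi^e_{{\bf s}_0\to{\bf t}}\circ\Psi^e_{{\bf s}\to{\bf s}_0}$ and likewise for $\tau.{\bf s}$) rewrites the equation $\Psi^e_{{\bf s}\to{\bf t}}(\lambda^1,\lambda^2)=(\lambda^2,\lambda^1)$ as
$$\Psi^e_{{\bf s}_0\to{\bf t}}(P)=\Psi^e_{{\bf s}_0\to\tau.{\bf s}}(P).\qquad(\ast)$$
So it suffices to prove that $(\ast)$ holds if and only if $P$ is symmetric.

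The crux is the observation that $\tau.{\bf s}-{\bf t}=(e/2,e/2)$ is a uniform shift. Writing $P=\widetilde{f}^{e,{\bf s}_0}_{i_1}\cdots\widetilde{f}^{e,{\bf s}_0}_{i_n}\emptyset$, I would argue that shifting the multicharge uniformly by $(e/2,e/2)$ shifts every content by $e/2$; this leaves the Young-diagram combinatorics and the order $\prec$ (including its tie-breaking rule, which only sees content differences and component indices) unchanged, while relabelling each residue $i$ as $i+e/2$. Hence the crystals for ${\bf t}$ and for $\tau.{\bf s}$ coincide as graphs on bipartitions up to this relabelling, so $\widetilde{f}^{e,\tau.{\bf s}}_{j}=\widetilde{f}^{e,{\bf t}}_{j-e/2}$, the vertex $\emptyset$ is common, and $\Phi_{e,\tau.{\bf s}}(n)=\Phi_{e,{\bf t}}(n)$. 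Applying the same operator sequence therefore gives
$$\Psi^e_{{\bf s}_0\to\tau.{\bf s}}(P)=\widetilde{f}^{e,{\bf t}}_{i_1-e/2}\cdots\widetilde{f}^{e,{\bf t}}_{i_n-e/2}\emptyset=\Psi^e_{{\bf s}_0\to{\bf t}}\bigl(\widetilde{f}^{e,{\bf s}_0}_{i_1-e/2}\cdots\widetilde{f}^{e,{\bf s}_0}_{i_n-e/2}\emptyset\bigr).$$
Now Lemma \ref{Lhup} for $l=2$ says precisely that shifting every residue by $e/2$ on $\Phi_{e,{\bf s}_0}$ realises the swap $(\mu^1,\mu^2)\mapsto(\mu^2,\mu^1)$ and keeps one inside $\Phi_{e,{\bf s}_0}$; since $-e/2\equiv e/2\pmod e$, the inner bipartition is exactly the swap of $P$. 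Thus $\Psi^e_{{\bf s}_0\to\tau.{\bf s}}(P)=\Psi^e_{{\bf s}_0\to{\bf t}}\bigl((P^2,P^1)\bigr)$, writing $P=(P^1,P^2)$.

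Finally, because $\Psi^e_{{\bf s}_0\to{\bf t}}$ is a bijection, $(\ast)$ is equivalent to $P=(P^2,P^1)$, i.e. to $P$ being symmetric, which by the first paragraph is exactly the divided condition; this yields both implications simultaneously. I expect the only delicate point to be the justification that a uniform $(e/2,e/2)$-shift of the charge induces an honest relabelling of the crystal graph — one must verify that it preserves $\prec$ together with its tie-breaking clause and fixes the empty bipartition — after which the clean congruence $-e/2\equiv e/2\pmod e$ lets Lemma \ref{Lhup} supply the swap and everything reduces to formal manipulation of the maps $\Psi^e_{\bullet\to\bullet}$. The remaining steps (the two reductions to $(\ast)$) are routine applications of the results already established.
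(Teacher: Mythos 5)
Your argument is correct and rests on exactly the same ingredients as the paper's proof: the characterization of divided bipartitions as those whose image under $\Psi^e_{{\bf s}\to(0,e/2)}$ is symmetric (equivalently, fixed by the $+e/2$ residue shift of Lemma \ref{Lhup}), the fact that $\tau$ realizes the component swap, and the observation that the uniform shift $\tau.{\bf s}-(Ne,e/2)=(e/2,e/2)$ merely relabels residues, combined with $-e/2\equiv e/2\pmod e$. The only difference is presentational: you conjugate everything to the FLOTW charge and obtain both implications at once from the injectivity of $\Psi^e_{{\bf s}_0\to{\bf t}}$, whereas the paper chases the $\widetilde{f}$-sequences through the three multicharges in each direction separately.
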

\begin{proof}
 Let ${\bf s}=(0,e/2+Ne)$. 
Assume that    $(\lambda^1,\lambda^2)$ is a divided bipartition. 
Let $(i_1,\ldots,i_n)\in (\mathbb{Z}/e\mathbb{Z})^n$ be such that 
 $$\widetilde{f}^{e, {\bf s}}_{i_1} \ldots \widetilde{f}^{e, {\bf s}}_{i_n} \emptyset =\ulambda$$
 Then we also have 
 $$\widetilde{f}^{e, {\bf s}}_{i_1+e/2} \ldots \widetilde{f}^{e, {\bf s}}_{i_n+e/2} \emptyset =\ulambda$$ 
 Now we have that ${\bf s}':=\tau. {\bf s}=(e/2+Ne,e)$ and by \S \ref{crt}, we obtain:
  $$\widetilde{f}^{e, {\bf s}'}_{i_1} \ldots \widetilde{f}^{e, {\bf s}'}_{i_n} \emptyset =(\lambda^2,\lambda^1)$$
 Then we also have 
 $$\widetilde{f}^{e, {\bf s}'}_{i_1+e/2} \ldots \widetilde{f}^{e, {\bf s}'}_{i_n+e/2} \emptyset =(\lambda^2,\lambda^1)$$ 
 But now  note that ${\bf s}=(Ne,e/2)={\bf s}'-(e/2,e/2)$ so it is clear that we obtain :
  $$\widetilde{f}^{e, (Ne,e/2)}_{i_1-e/2} \ldots \widetilde{f}^{e, (Ne,e/2)}_{i_n-e/2} \emptyset =(\lambda^2,\lambda^1)$$
  and this implies that $\Psi^e_{(0,e/2+Ne)\to (Ne,e/2)} (\lambda^1,\lambda^2)= (\lambda^2,\lambda^1)$. 
   Reciprocally, assume that 
   $$\Psi^e_{(0,e/2+Ne)\to (Ne,e/2)} (\lambda^1,\lambda^2)= (\lambda^2,\lambda^1).$$
   Assume that $(i_1,\ldots,i_n)\in (\mathbb{Z}/e\mathbb{Z})^n$ is such that 
 $$\widetilde{f}^{e, {\bf s}}_{i_1} \ldots \widetilde{f}^{e, {\bf s}}_{i_n} \emptyset =(\lambda^1,\lambda^2).$$
 Then we have:
  $$\widetilde{f}^{e, (Ne,e/2)}_{i_1} \ldots \widetilde{f}^{e, (Ne,e/2)}_{i_n} \emptyset =(\lambda^2,\lambda^1).$$
 We have $\tau. (Ne,e/2)=(e/2,Ne+e)$,
   and thus 
     $$\widetilde{f}^{e,(e/2,Ne+e)}_{i_1} \ldots \widetilde{f}^{e, (e/2,Ne+e)}_{i_n} \emptyset =(\lambda^1,\lambda^2).$$
As $(0,e/2+Ne)=(e/2,Ne+e)-(e/2,e/2)$, we obtain 
      $$\widetilde{f}^{e,(0,e/2+Ne)}_{i_1+e/2} \ldots \widetilde{f}^{e, (0,e/2+Ne)}_{i_n+e/2} \emptyset =(\lambda^1,\lambda^2)$$ 
     which implies that $(\lambda^1,\lambda^2)$ is a divided partition (for $(0,e/2+Ne)$). 
%and 
% $$\widetilde{f}^{e, (Ne,e/2)}_{i_1} \ldots \widetilde{f}^{e, (Ne,e/2)}_{i_n} \emptyset =(\lambda^2,\lambda^1)$$ 
% 

\end{proof}

\vspace{0.5cm}

\noindent {\bf Address:} \\

\noindent \textsc{Nicolas Jacon}, Universit\'e de Reims Champagne-Ardenne, UFR Sciences exactes et naturelles, Laboratoire de Math\'ematiques UMR9008
Moulin de la Housse BP 1039, 51100 Reims, FRANCE\\  \emph{nicolas.jacon@univ-reims.fr}\\

\end{document}